\documentclass[pdflatex,sn-mathphys-num]{sn-jnl}

\usepackage[utf8]{inputenc}
\usepackage[T1]{fontenc}
\usepackage{graphicx}%
\usepackage{multirow}%
\usepackage{amsmath,amssymb,amsfonts}%
\usepackage{amsthm}%
\usepackage{mathrsfs}%
\usepackage[title]{appendix}%
\usepackage{xcolor}%
\usepackage{textcomp}%
\usepackage{manyfoot}%
\usepackage{booktabs}%
\usepackage{algorithm}%
\usepackage{algorithmicx}%
\usepackage{algpseudocode}%
\usepackage{listings}%
\usepackage{cleveref}
\usepackage{tablefootnote}
\usepackage{enumitem}

\DeclareMathOperator{\Tr}{tr}
\newcommand{\R}{\mathbb{R}}
\newcommand{\E}{\mathbb{E}}
\newcommand{\norm}[2]{\|#1\|_{#2}}
\newcommand{\Anorm}[1]{\|#1\|_{A}}
\newcommand{\lmin}{\lambda_{\min}}
\newcommand{\lmax}{\lambda_{\max}}

\theoremstyle{thmstyleone}%
\newtheorem{theorem}{Theorem}
\newtheorem{corollary}[theorem]{Corollary}
\newtheorem{lemma}[theorem]{Lemma}

\theoremstyle{thmstyletwo}%

\theoremstyle{thmstylethree}%
\newtheorem{definition}{Definition}%

\begin{document}
\title[Residual-Weighted Randomized Jacobi]{Residual-Weighted Randomized Jacobi: Sharpened Bounds via Residual Concentration and Asynchronous Extension}


\author*[1]{\fnm{Evan} \sur{Coleman}}\email{ecolema4@umw.edu}

\affil*[1]{\orgdiv{Department of Computer Science}, \orgname{University of Mary Washington}, \city{Fredericksburg},  \state{Virginia}, \country{USA}}


\abstract{We study randomized stationary methods for symmetric positive definite
linear systems in which component $j$ is selected with probability
proportional to $|r_j|^\ell$. This power-weighted family interpolates
continuously between uniform randomized Jacobi as $\ell \to 0$ and
Gauss--Southwell greedy relaxation as $\ell \to \infty$.
For the central case $\ell = 2$, we sharpen the standard one-step
convergence analysis using the inverse participation ratio (IPR)
$\nu^2(r) = n\|r\|_4^4/\|r\|_2^4$, which equals $1$ when the residual
is uniform and grows toward $n$ as it concentrates. The resulting
bound amplifies the expected per-step progress by exactly $\nu^2$
over the uniform-sampling baseline. The IPR
can be computed online at $O(n)$ cost and doubles as a per-iteration
diagnostic.

We extend the analysis to asynchronous power-weighted Jacobi via the
Avron--Druinsky--Gupta framework, obtaining an epoch-based convergence
theorem in which the IPR controls both the progress coefficient and
the allowed-delay window.
Numerical experiments on shared-memory hardware support the sharpened
bound and show the IPR trajectory is essentially concurrency-insensitive.
Unexpectedly, consistent-reads execution, the easier case for the ADG
analysis, destabilizes power-weighted sampling at high concurrency
while inconsistent reads remain stable; the same IPR that amplifies
progress 
amplifies a thread-collision rate that inconsistent reads appear to absorb.
We propose a feedback-damping mechanism and verify
two predictions about its dependence on problem size.}

\keywords{Asynchronous iterative methods, Randomized Jacobi, Inverse participation ratio, Residual-weighted sampling}



\maketitle

\section{Introduction}
\label{sec:intro}

Randomized iterative methods for solving linear systems have become a standard
tool in numerical linear algebra, optimization, and scientific computing.  In
row-action and coordinate-relaxation methods, each iteration updates only one
row, coordinate, component, or block of the current approximation.  Randomized
Kaczmarz methods \cite{strohmer2009randomized}, randomized coordinate descent,
and randomized Gauss--Seidel-type schemes \cite{leventhal2010randomized}
illustrate the appeal of this approach: each step is inexpensive, the update
formula is simple, and the method can often be implemented with relatively low
synchronization overhead.  These properties make randomized stationary methods
especially natural candidates for asynchronous parallel computation
\cite{avron2015revisiting}.

A central design choice in such methods is the component-selection rule.  The
simplest choice is uniform random sampling, which is easy to implement and
analyze.  More refined fixed distributions, such as row-norm or
diagonal-weighted sampling, incorporate matrix geometry but remain independent
of the current residual.  At the opposite extreme are greedy rules such as
Southwell relaxation \cite{southwell1946relaxation}, which
selects the component with the largest residual or largest predicted progress.
Greedy selection can be substantially more efficient per iteration, but it
requires global residual comparisons, sorting, or priority maintenance.  These
operations can dominate the cost of a relaxation step and are particularly
awkward in asynchronous or distributed settings.

This paper studies a middle ground between fixed randomized sampling and fully
greedy relaxation.  Given the current residual $r^{(i)}=b-Ax^{(i)}$, we select
component $j$ with probability
\[
    P(k=j) =
    \frac{|r_j^{(i)}|^\ell}{\sum_m |r_m^{(i)}|^\ell},
    \qquad \ell>0.
\]
This \emph{power-weighted} family is residual-adaptive but does not require
sorting, thresholding, or constructing an eligible set.  It also has transparent
limiting behavior: as $\ell \to 0$, the method approaches uniform randomized
Jacobi, while as $\ell \to \infty$, it approaches Gauss--Southwell selection.  The
parameter $\ell$ therefore controls how aggressively the method concentrates
sampling effort on large-residual components.

The motivation for this family is simple.  Uniform sampling treats all
components as equally important, even when most of the residual mass is
concentrated in only a few entries.  In such a regime, many uniform updates are
spent on components that can contribute little immediate progress.  Residual-weighted sampling instead increases the probability of selecting components
that currently dominate the residual.  The key question is whether this
intuition can be made quantitative.  In particular, one would like a convergence
bound that reduces to the uniform bound when the residual is flat, but becomes
strictly sharper when the residual is non-uniform.

For synchronous randomized Jacobi, we answer this question using the
inverse participation ratio (IPR) of the residual,
\[
\nu^2(r) = \frac{n \cdot \norm{r}{4}^4}{\norm{r}{2}^4},
\]
a scalar that equals one when all residual components have equal magnitude
and grows toward $n$ as the residual concentrates on a single component.
For power-weighted sampling with $\ell = 2$, the expected one-step progress
naturally contains a residual-norm ratio that rewrites as a multiple of the
IPR, and the resulting sharpened bound amplifies the expected per-step
progress by exactly that scalar. The same quantity that explains \emph{when}
adaptive sampling helps therefore quantifies \emph{how much} it helps.

The asynchronous setting is more delicate. In uniform asynchronous Jacobi,
the component choice is independent of the stale iterate, and the expected
interference between simultaneously updated components is averaged away by
the uniform sampling distribution. Residual-weighted sampling breaks both
features: the sampling probabilities depend on the stale residual seen by
the worker, coupling the choice to the delay, and concentrated residuals
produce concentrated sampling distributions whose interference is no longer
smoothed by the averaging the uniform analysis relies on. The result is a
genuine tradeoff: residual concentration increases expected progress but
can also increase the worst-case interference bound. We extend the
Avron--Druinsky--Gupta (ADG) framework for uniform asynchronous
Jacobi~\cite{avron2015revisiting} to this residual-dependent setting under
a uniform bound on the IPR trajectory. The resulting theorem is structurally
analogous to ADG's epoch-reduction theorem, with the IPR controlling both
the progress coefficient and the allowed-delay window, and should be read
as a first asynchronous convergence result for power-weighted sampling
rather than as a strict generalization of the uniform theory.

The IPR sits well above its uniform floor of 1 on every test problem
in our suite, with steady-state values of $\nu^2 \in [4, 6]$ across the
three problem classes and substantially higher early-iteration values on
instances with localized forcing. Hardware experiments on a 96-core EPYC
node confirm that power-weighted sampling translates the IPR advantage
into measurably faster convergence under emergent asynchrony, and the IPR
trajectory is essentially insensitive to thread count: concurrency-induced
variation stays under 3\% across two decades of thread count, supporting
the global trajectory bound used in the asynchronous theorem below.

The contributions of this paper are as follows.

\begin{enumerate}
    \item We organize randomized, adaptive, and greedy relaxation methods into a
    component-selection adaptivity spectrum, positioning residual-weighted
    sampling between fixed randomized methods and Southwell-type greedy
    relaxation.

    \item We prove that power-weighted Jacobi with any $\ell>0$ matches the
    standard uniform randomized Jacobi convergence bound in the worst case.

    \item For $\ell=2$, we derive an IPR-sharpened one-step convergence bound showing
    that the expected progress is amplified by the residual's inverse
    participation ratio.  This gives a theoretical separation between uniform
    and residual-weighted sampling whenever the residual is non-uniform.

    \item We extend the analysis to asynchronous power-weighted Jacobi.  Under a
    uniform IPR-trajectory assumption, we obtain an ADG-style epoch reduction
    theorem that exposes the progress--interference tradeoff created by
    residual-dependent sampling.

    \item We investigate the IPR and convergence behavior numerically on
    several example problems, with the goal of assessing whether the residual
    concentration required by the sharpened bounds occurs in practical
    stationary iterations and whether it persists under asynchronous execution.

    \item We observe empirically that the theorem above assumes consistent
    reads (the standard easier case in the ADG framework), but our hardware
    experiments reveal that consistent read execution is in fact the unsafe case for
    power-weighted sampling at high concurrency: consistent reads destabilize
    the iteration on all three test problems, while inconsistent reads remain
    stable. 
\end{enumerate}

The rest of the paper is organized as follows.  \Cref{sec:related-work}
reviews asynchronous iterative methods and related randomized and adaptive
selection rules.  \Cref{sec:background} introduces the randomized Jacobi
framework, the one-step progress identity, baseline convergence bounds, and the
component-selection adaptivity spectrum.  \Cref{sect:baseline-residual-weighted}
develops the synchronous residual-weighted analysis, including the worst-case
power-weighted bound and the IPR-sharpened result.  \Cref{sec:async} extends the
analysis to asynchronous power-weighted Jacobi and identifies the main sources
of looseness in the current theorem.  \Cref{sec:experiments} presents numerical
experiments, and \Cref{sec:conclusion} concludes.

\section{Related Work}
\label{sec:related-work}

Asynchronous iterative methods have a long history in numerical linear algebra,
beginning with the chaotic relaxation framework of Chazan and Miranker
\cite{chazan1969chaotic} and the multiprocessor analysis of Baudet
\cite{baudet1978asynchronous}.  These early works established that classical
stationary iterations can converge even when processors update components using
stale information, provided the delays and communication patterns satisfy
appropriate regularity assumptions.  The broader theory of asynchronous parallel
computation was later developed systematically by Bertsekas and Tsitsiklis
\cite{bertsekas1989parallel}, and the mathematical foundations and convergence
conditions for asynchronous iterations were surveyed by Frommer and Szyld
\cite{frommer2000asynchronous}.  In this classical literature, the primary goal
is to characterize when asynchrony preserves convergence; sharp convergence
rates and adaptive component-selection rules are typically secondary concerns.

A probabilistic viewpoint on asynchronous iteration was developed by Strikwerda
\cite{strikwerda2002probabilistic}, who extended deterministic asynchronous
models by allowing both component choices and delays to be described
probabilistically.  This perspective is particularly relevant for modern
parallel and distributed implementations, where update order, communication
delays, and processor availability may be irregular or random.  More recently,
Avron, Druinsky, and Gupta revisited asynchronous linear solvers for symmetric
positive definite systems and provided convergence-rate bounds for randomized
asynchronous Jacobi under bounded-delay and independence assumptions
\cite{avron2015revisiting}.  Their work is the closest theoretical point of
comparison for the asynchronous analysis in this paper: like their model, we
study randomized component updates applied to SPD systems, but we replace fixed
uniform sampling with residual-dependent sampling and therefore must account for
the coupling between stale residuals, sampling probabilities, and delayed
updates.

Asynchronous stationary iterations are also of practical interest because they
can reduce synchronization overhead in settings where exact high-accuracy solves
are not required at every stage.  Chow and Patel developed fine-grained parallel
algorithms for incomplete LU factorization, motivated by the use of incomplete
factorizations as scalable preconditioners for linear solvers
\cite{chow2015fine}.  Related work has studied asynchronous Jacobi iterations
both as standalone solvers and as building blocks for larger methods.  For
example, Wolfson-Pou and Chow developed convergence models for asynchronous
Jacobi and observed behavior that can differ substantially from simple delay
models \cite{wolfsonpou2018convergence}.  They also investigated asynchronous
multigrid methods, where asynchronous relaxation can act as a smoother and help
reduce synchronization costs within multilevel solvers
\cite{wolfsonpou2019multigrid}.  Chow, Frommer, and Szyld analyzed
asynchronous Richardson iterations, further clarifying the relationship between
classical stationary methods, relaxation parameters, and asynchronous execution
\cite{chow2021asynchronous}.  More recent work has extended these ideas to
asynchronous semi-iterative and Chebyshev-type methods, including multigrid
preconditioning \cite{wolfsonpou2025asynchronous}.

Application-oriented studies have shown asynchronous linear iterations to be
useful in large-scale PDE contexts~\cite{glusa2020scalable,kashi2021asynchronous,kashi2025effectiveness},
particularly as low-synchronization components within preconditioners,
smoothers, and domain-decomposition solvers rather than as replacements for
Krylov methods in high-accuracy regimes.
The same synchronization--staleness tradeoff appears in neighboring areas:
asynchronous coordinate-update methods such as HOGWILD!~\cite{recht2011hogwild},
asynchronous stochastic coordinate descent~\cite{liu2015asynchronous}, and the
ARock fixed-point framework~\cite{peng2016arock} all exploit it in optimization
and machine learning, providing useful context even though the present paper
focuses on stationary solvers for linear systems.
Finally, asynchronous methods are closely connected to questions of resilience
and irregular execution.  In distributed or cloud-based environments, workers
may be delayed, temporarily unavailable, or subject to heterogeneous performance.
Recent straggler-tolerant stationary methods exploit this observation by
allowing iterations to proceed using partial or delayed information
\cite{kalantzis2025straggler}.  Fault-tolerant variants of asynchronous linear
solvers have also been studied in the context of data corruption and recovery
\cite{coleman2021fault,coleman2026fault,vogl2024modifying}.  These directions are not the focus
of the present paper, but they reinforce the broader motivation for analyzing
stationary methods under stale and irregular update patterns.

The present work fits into this literature by studying how adaptive,
residual-dependent component selection interacts with asynchronous stationary
iteration.  Classical asynchronous theory explains when stale updates can be
tolerated; randomized asynchronous Jacobi provides a rate baseline under uniform
sampling; and practical studies motivate the use of asynchronous stationary
iterations as scalable smoothers, preconditioners, and low-accuracy solvers.  Our
contribution is complementary: we ask whether the component-selection rule itself
can be biased toward high-residual entries while retaining useful convergence
guarantees, and we quantify the resulting advantage through residual
non-uniformity.

\section{Background}
\label{sec:background}

\subsection{Setup and Notation}

Let $A \in \R^{n \times n}$ be symmetric positive definite (SPD) and let
$b \in \R^n$.  We consider the linear system
\[
    Ax = b,
\]
whose unique solution is denoted by $x^* = A^{-1}b$.  Since $A$ is SPD, it has
real positive eigenvalues
\[
    0 < \lambda_1 \leq \lambda_2 \leq \cdots \leq \lambda_n,
\]
and we write
\[
    \lmin(A) = \lambda_1, \qquad
    \lmax(A) = \lambda_n, \qquad
    \kappa = \frac{\lmax(A)}{\lmin(A)}.
\]
When the matrix is clear from context, we write simply $\lmin$ and $\lmax$.

At iteration $i$, the current approximate solution is $x^{(i)}$.  We denote the
error and residual by
\[
    e^{(i)} = x^{(i)} - x^*, \qquad
    r^{(i)} = b - A x^{(i)}.
\]
Since $Ax^*=b$, the residual and error are related by
\[
    r^{(i)} = -A e^{(i)}.
\]
We measure error in the energy norm induced by $A$,
\[
    \Anorm{z}^2 = z^T A z.
\]
We also use the standard basis vectors $e_1,\ldots,e_n$, where $e_j$ denotes
the vector with a $1$ in component $j$ and zeros elsewhere.  To avoid confusing
the basis vector $e_j$ with the iteration error $e^{(i)}$, the iteration index
will always be written in parentheses.  The diagonal entries of $A$ are denoted
by $A_{jj}$, and we define
\[
    A_{\max} = \max_{1 \leq j \leq n} A_{jj}, \qquad
    A_{\min} = \min_{1 \leq j \leq n} A_{jj}.
\]
For SPD matrices, $A_{jj} = e_j^T A e_j > 0$ for every $j$, so all coordinate
relaxations below are well-defined. We keep the diagonal entries explicit
throughout the synchronous analysis because the baseline sampling
distributions depend on them, even though one could assume $A_{jj} = 1$
without loss of generality after a diagonal change of variables; this
keeps residual-weighted sampling directly comparable to both uniform and
diagonal-weighted sampling.

We will repeatedly use the spectral inequalities
\[
    \lmin \norm{z}{2}^2
    \leq
    \Anorm{z}^2
    \leq
    \lmax \norm{z}{2}^2,
\]
and, since $r=-Ae$,
\begin{equation}
\label{eq:residual-energy-bounds}
    \lmin \Anorm{e}^2
    \leq
    \norm{r}{2}^2
    \leq
    \lmax \Anorm{e}^2.
\end{equation}
The lower bound in~\eqref{eq:residual-energy-bounds} is the main bridge between
expected residual reduction and convergence in the $A$-norm.

\subsection{Randomized Jacobi Relaxation}

A single Jacobi relaxation updates one component of $x$ using the current
residual.  If component $k$ is selected at iteration $i$, the update is
\begin{equation}
\label{eq:update}
    x^{(i+1)}
    =
    x^{(i)} + \frac{r_k^{(i)}}{A_{kk}}\, e_k .
\end{equation}
The effect of one coordinate relaxation on the $A$-norm error is exact.  Since
$r_k^{(i)} = -e_k^T A e^{(i)}$, the new error is
\[
    e^{(i+1)}
    =
    e^{(i)} + \frac{r_k^{(i)}}{A_{kk}} e_k .
\]
Expanding the $A$-norm gives
\begin{align}
    \Anorm{e^{(i+1)}}^2
    &=
    \left(e^{(i)} + \frac{r_k^{(i)}}{A_{kk}}e_k\right)^T
    A
    \left(e^{(i)} + \frac{r_k^{(i)}}{A_{kk}}e_k\right) \notag \\
    &=
    \Anorm{e^{(i)}}^2
    + 2\frac{r_k^{(i)}}{A_{kk}} e_k^T A e^{(i)}
    + \left(\frac{r_k^{(i)}}{A_{kk}}\right)^2 A_{kk} \notag \\
    &=
    \Anorm{e^{(i)}}^2
    - 2\frac{(r_k^{(i)})^2}{A_{kk}}
    + \frac{(r_k^{(i)})^2}{A_{kk}}.
\end{align}
Therefore
\begin{equation}
\label{eq:progress-identity}
    \Anorm{e^{(i+1)}}^2
    =
    \Anorm{e^{(i)}}^2
    -
    \frac{(r_k^{(i)})^2}{A_{kk}} .
\end{equation}
This identity is deterministic and does not depend on how the component $k$ is
chosen.  Every exact coordinate relaxation weakly decreases the $A$-norm error,
and the amount of progress is precisely the squared residual in the selected
component, scaled by the corresponding diagonal entry.

Randomization enters only through the choice of $k$.  Let
\[
    p_j^{(i)} = P(k=j \mid x^{(i)}), \qquad j=1,\ldots,n,
\]
where the distribution may be fixed in advance or may depend on the current
iterate through the residual $r^{(i)}$.  Taking conditional expectation in
\eqref{eq:progress-identity} gives the basic expected-progress formula
\begin{equation}
\label{eq:expected-progress-general}
    \E\!\left[\Anorm{e^{(i+1)}}^2 \mid x^{(i)}\right]
    =
    \Anorm{e^{(i)}}^2
    -
    \sum_{j=1}^n p_j^{(i)}
    \frac{(r_j^{(i)})^2}{A_{jj}} .
\end{equation}
Thus the analysis of any randomized component-selection rule reduces to lower
bounding the expected progress term
\[
    \sum_{j=1}^n p_j^{(i)}
    \frac{(r_j^{(i)})^2}{A_{jj}}.
\]
Uniform sampling, diagonal-weighted sampling, greedy sampling, and the
residual-weighted methods studied later differ only in how they choose the
probabilities $p_j^{(i)}$.
Note that if $r^{(i)}=0$, then $x^{(i)}=x^*$ and the method has already converged.  In
that case the sampling distribution is irrelevant.  Throughout the analysis of
residual-dependent probabilities, we implicitly restrict attention to iterations
with $r^{(i)} \neq 0$.


We first recall two fixed-distribution baselines.  These are the reference
points against which residual-weighted sampling will be compared.  The first
uses diagonal-weighted sampling and recovers the classical
Leventhal--Lewis-type rate for randomized coordinate descent on SPD systems.  The
second uses uniform sampling and gives the natural baseline for asynchronous
Jacobi, where uniform selection is often the simplest distribution to implement
and analyze.

\begin{theorem}[Diagonal-weighted sampling; Leventhal--Lewis]
\label{thm:leventhal-lewis}
Suppose that at each iteration,
\[
    P(k=j) = \frac{A_{jj}}{\Tr(A)}.
\]
Then
\[
    \E\!\left[\Anorm{e^{(i+1)}}^2 \mid x^{(i)}\right]
    \leq
    \left(1 - \frac{\lmin(A)}{\Tr(A)}\right)
    \Anorm{e^{(i)}}^2.
\]
\end{theorem}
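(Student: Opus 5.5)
The plan is to substitute the diagonal-weighted probabilities into the general expected-progress formula \eqref{eq:expected-progress-general} and then bound the resulting term. With $p_j^{(i)} = A_{jj}/\Tr(A)$, the diagonal entries cancel in each summand:
\[
    \sum_{j=1}^n p_j^{(i)} \frac{(r_j^{(i)})^2}{A_{jj}}
    = \frac{1}{\Tr(A)} \sum_{j=1}^n (r_j^{(i)})^2
    = \frac{\norm{r^{(i)}}{2}^2}{\Tr(A)}.
\]
This cancellation is the only structural point. It explains why diagonal weighting is the natural fixed distribution here: the progress term becomes a multiple of the full squared residual norm, independent of the diagonal scaling.

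Next, I will convert the residual norm back to the energy norm of the error. The lower bound in \eqref{eq:residual-energy-bounds} gives $\norm{r^{(i)}}{2}^2 \geq \lmin \Anorm{e^{(i)}}^2$. Inserting this into \eqref{eq:expected-progress-general} yields
\[
    \E\!\left[\Anorm{e^{(i+1)}}^2 \mid x^{(i)}\right]
    \leq \Anorm{e^{(i)}}^2 - \frac{\lmin}{\Tr(A)}\Anorm{e^{(i)}}^2,
\]
which is the claimed bound. For completeness I will check that lower bound directly. It follows from $\norm{Ae}{2}^2 = e^T A^2 e$ and $\Anorm{e}^2 = e^T A e$ in an eigenbasis of $A$, since $\lambda^2 \geq \lmin \lambda$ for every eigenvalue $\lambda$. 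I will also remark that the contraction factor lies in $[0,1)$, because $\lmin \leq \min_j A_{jj} \leq \Tr(A)$, so the bound is meaningful.

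There is no real obstacle. The only points needing care are that the probabilities are well defined, which holds because $A_{jj}>0$ for SPD $A$ so that $\Tr(A)>0$, and the direction of the inequality in \eqref{eq:residual-energy-bounds}. The case $r^{(i)}=0$ is trivial, since then both sides vanish.
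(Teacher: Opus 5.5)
Your proof is correct and is the same approach the paper uses. The paper states this result without a separate proof, but it describes this template: substitute the diagonal-weighted probabilities into \eqref{eq:expected-progress-general}, use the cancellation to get $\norm{r^{(i)}}{2}^2/\Tr(A)$, and apply the lower bound in \eqref{eq:residual-energy-bounds}.
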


\begin{theorem}[Uniform sampling]
\label{thm:uniform}
Suppose that at each iteration,
\[
    P(k=j) = \frac{1}{n}.
\]
Then
\[
    \E\!\left[\Anorm{e^{(i+1)}}^2 \mid x^{(i)}\right]
    \leq
    \left(1 - \frac{\lmin(A)}{n A_{\max}}\right)
    \Anorm{e^{(i)}}^2.
\]
\end{theorem}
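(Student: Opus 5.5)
The plan is to start from the general expected-progress formula \eqref{eq:expected-progress-general}. With $p_j^{(i)} = 1/n$ it reads
\[
    \E\!\left[\Anorm{e^{(i+1)}}^2 \mid x^{(i)}\right]
    =
    \Anorm{e^{(i)}}^2 - \frac{1}{n}\sum_{j=1}^n \frac{(r_j^{(i)})^2}{A_{jj}} .
\]
The whole argument then reduces to a lower bound on the progress term.

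First, since $A_{jj} \le A_{\max}$ for every $j$, each summand satisfies $(r_j^{(i)})^2/A_{jj} \ge (r_j^{(i)})^2/A_{\max}$. Summing over $j$ gives
\[
    \frac{1}{n}\sum_{j=1}^n \frac{(r_j^{(i)})^2}{A_{jj}} \ge \frac{\norm{r^{(i)}}{2}^2}{n A_{\max}} .
\]
Second, I would apply the lower bound in \eqref{eq:residual-energy-bounds}, namely $\norm{r^{(i)}}{2}^2 \ge \lmin \Anorm{e^{(i)}}^2$. This converts the residual norm into the energy norm of the error and gives a progress term of at least $\frac{\lmin}{nA_{\max}}\Anorm{e^{(i)}}^2$.

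Substituting this back into the expected-progress identity and factoring out $\Anorm{e^{(i)}}^2$ yields the claimed contraction factor $1 - \lmin/(nA_{\max})$.

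There is no real obstacle. The only point needing care is justifying the residual-energy inequality $\norm{r}{2}^2 = e^TA^2e \ge \lmin\, e^TAe$, and the paper has already stated this. One could also remark that the factor lies in $[0,1)$, because $\lmin \le A_{jj} \le A_{\max}$, but this is not needed for the inequality itself.
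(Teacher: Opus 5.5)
Your proof is correct and matches the paper's approach. The paper states this theorem without a separate written proof and only describes the template for it: apply \eqref{eq:expected-progress-general} with $p_j^{(i)}=1/n$, bound $A_{jj}\le A_{\max}$ to get $\norm{r^{(i)}}{2}^2/(nA_{\max})$, and finish with $\norm{r^{(i)}}{2}^2\ge \lmin\Anorm{e^{(i)}}^2$; your argument carries out exactly these steps.
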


Note that since $\Tr(A) = \sum_{j=1}^n A_{jj} \leq n A_{\max}$,
the diagonal-weighted bound is at least as strong as the crude uniform bound.
When $A$ has constant diagonal, the two sampling distributions coincide and
$\Tr(A)=nA_{\max}$, so the two bounds are identical.  This is the case for many
model discretizations, including standard finite-difference Laplacians with
constant diagonal.

Both theorems use the same proof template: start from the exact one-step
identity, take expectation over the selected component, and lower bound the
expected residual contribution by a multiple of $\norm{r^{(i)}}{2}^2$.  The
residual-weighted analysis below follows the same template.  The key difference
is that the sampling probabilities will depend on the current residual, causing
higher-residual components to contribute more heavily to the expected progress.
The worst-case question is whether this bias can ever hurt relative to uniform
sampling; the sharpened question is how much it helps when the residual is
non-uniform.

\subsection{Component-Selection Adaptivity Spectrum}
\label{sect:sampling-adaptivity-spectrum}

Single-component relaxation methods differ not only in the update formula, but
also in the rule used to choose the next component, row, block, or sketch.  This
choice ranges from completely oblivious rules, whose probabilities are fixed
before the iteration begins, to fully greedy rules, which compute the current
residual and select the component that promises the largest immediate progress.
The methods considered in this paper sit between these extremes: they use the
current residual to bias a randomized selection rule, but avoid the explicit
global maximization or sorting step associated with classical greedy relaxation.

At the non-adaptive end of the spectrum, the sampling distribution is fixed
before the iteration begins and never reacts to the current residual.
Strohmer and Vershynin's randomized Kaczmarz method samples rows with
probabilities proportional to squared row norms~\cite{strohmer2009randomized},
while Leventhal and Lewis analyze randomized coordinate descent and
Gauss--Seidel-type methods with probabilities chosen from static problem
data~\cite{leventhal2010randomized}. Related importance-sampling ideas appear
throughout randomized coordinate descent~\cite{nesterov2012efficiency} and
the sketch-and-project framework~\cite{gower2015randomized}. These methods
are adaptive only in the weak sense that their distributions may reflect
matrix geometry. The randomized asynchronous Jacobi analysis of
Avron, Druinsky, and Gupta~\cite{avron2015revisiting} also belongs to this
class, using uniform random selection to obtain convergence bounds under
bounded-delay and independence assumptions.

At the opposite end of the spectrum are greedy relaxation methods, which
inspect the current residual and select the component offering the largest
immediate progress. The classical Southwell or Gauss--Southwell
rule~\cite{southwell1946relaxation} and Motzkin's row-action
relaxation~\cite{agmon1954relaxation,motzkin1954relaxation} are the
historical templates; Nutini et al.~\cite{nutini2015coordinate} give a
modern coordinate-descent analysis showing that Gauss--Southwell-type
rules can converge faster than random selection except in limiting cases,
and related analyses appear in~\cite{griebel2012greedy,frommer2023convergence}.
Greedy rules are attractive because they directly target the most productive
component, but they require global residual comparisons or rank maintenance,
which can be expensive and especially awkward in parallel or asynchronous
settings.

Between these extremes lie hybrid methods that use partial residual
information. Sample-then-greedy methods like the Sampling Kaczmarz--Motzkin
algorithm of De Loera et al.~\cite{deloera2017sampling} (sharpened
by Haddock and Ma~\cite{haddock2021greed}) sample a random subset and apply
greedy selection within it. Threshold-based methods such as Bai and Wu's
greedy randomized Kaczmarz~\cite{bai2018greedyk,bai2018relaxed} and the
analogous greedy randomized coordinate descent for least squares
problems~\cite{bai2019greedy} use the residual to form an eligible set
and then sample within that set using residual-dependent weights. The
adaptive sketch-and-project framework of Gower et
al.~\cite{gower2021adaptive} provides a unifying perspective on adaptive
sampling, including max-distance, proportional, and capped sampling
variants; these correspond roughly to greedy, residual-weighted, and
thresholded selection in our taxonomy. Finally, Parallel and Distributed
Southwell~\cite{wolfsonpou2016reducing,wolfsonpou2017distributed} adapt
greedy selection for communication reduction by localizing the maximality
test. Each of these methods either uses fixed probabilities, performs
discrete greedy selection over a (possibly residual-determined) eligible
set, or maintains residual rankings; none uses a continuous residual-biased
probability distribution over all components.

The present work occupies a different point on the spectrum.  We consider
selection probabilities of the form
\[
    P(k=j) \propto |r_j|^\ell.
\]
This rule is residual-adaptive but does not require sorting, thresholding,
or constructing an eligible set.  The parameter
$\ell$ gives an explicit continuous interpolation: as $\ell \to 0$ the method
approaches uniform random selection, while as $\ell \to \infty$ it approaches
Gauss--Southwell selection. Our earlier work on residual-biased randomized
asynchronous solvers~\cite{coleman2019enhancing} is the closest predecessor:
it demonstrated empirically that non-uniform, residual-informed component
selection can improve asynchronous linear solver performance, but it used
rank-based distributions that require periodic sorting or rank maintenance.
The present work replaces that discrete rank-based mechanism with a
continuous residual-weighted sampling family and supplies a convergence
analysis explaining when such bias is beneficial.

\begin{center}
\footnotesize
\renewcommand{\arraystretch}{1.50}
\begin{tabular}{@{}p{0.20\linewidth}p{0.3\linewidth}p{0.40\linewidth}@{}}
\toprule
\textbf{Selection rule} & \textbf{Representative methods} & \textbf{Residual adaptivity} \\
\midrule
Fixed sampling
&
Randomized Kaczmarz \cite{strohmer2009randomized};
randomized CD
\cite{leventhal2010randomized,nesterov2012efficiency};
uniform asynchronous Jacobi \cite{avron2015revisiting}
&
Probabilities are fixed before the iteration begins.
The current residual does not affect the sampling rule.
\\

Sample then greed
&
Sampling Kaczmarz--Motzkin
\cite{deloera2017sampling};
improved SKM analysis \cite{haddock2021greed}
&
A random subset is sampled first; the final choice is greedy within that subset.
\\

Thresholded adaptive sampling
&
Greedy randomized Kaczmarz and CD \cite{bai2018greedyk,bai2018relaxed,bai2019greedy};
capped sketch-and-project sampling \cite{gower2021adaptive}
&
The residual determines an eligible set or capped set, and sampling is then
performed within that set using residual-dependent weights.
\\

Greedy selection
&
Southwell relaxation \cite{southwell1946relaxation};
Motzkin relaxation \cite{agmon1954relaxation,motzkin1954relaxation};
Gauss--Southwell CD \cite{nutini2015coordinate};
Rule-based greedy approach \cite{griebel2012greedy,frommer2023convergence}
&
The selected component maximizes a residual-based progress measure.
\\

Parallel greedy selection
&
Parallel Southwell and Distributed Southwell
\cite{wolfsonpou2016reducing,wolfsonpou2017distributed}
&
Greedy selection is localized to reduce communication.
\\

Rank-based residual-biased sampling
&
Residual-biased randomized asynchronous solvers
\cite{coleman2019enhancing}
&
Components ranked by current residual magnitude and a non-uniform distribution is used to sample.
\\

Continuous residual-weighted sampling
&
This work; adaptive sketch-and-project proportional sampling
\cite{gower2021adaptive}
&
All components eligible, probabilities vary directly \& smoothly with current residual.  Parameters give explicit limits from uniform
sampling to Gauss--Southwell selection.
\\
\bottomrule
\end{tabular}
\end{center}

\section{Residual-Weighted Sampling}
\label{sect:baseline-residual-weighted}

We establish that residual-weighted sampling matches the uniform bound in the worst case. The following two theorems are, to our knowledge, not stated explicitly in the prior literature; they are recorded here because they serve as the starting point for the sharpened bounds in Section~\ref{sec:sharp}. Uniform sampling emerges as a special case of Theorem~\ref{thm:baseline-power} as $\ell \to 0$.

\begin{theorem}[Baseline bound, power-weighted sampling]
\label{thm:baseline-power}
Under power-weighted sampling $P(k=j) = |r_j^{(i)}|^\ell / \sum_m |r_m^{(i)}|^\ell$
with any $\ell > 0$:
$$
\E[\Anorm{e^{(i+1)}}^2 \mid x^{(i)}] \leq \left(1 - \frac{\lmin(A)}{n \cdot A_{\max}}\right) \Anorm{e^{(i)}}^2.
$$
\end{theorem}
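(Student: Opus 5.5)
We follow the template of Theorems~\ref{thm:leventhal-lewis} and~\ref{thm:uniform}, starting from the expected-progress formula~\eqref{eq:expected-progress-general} with $p_j^{(i)} = |r_j|^\ell/\sum_m |r_m|^\ell$ (dropping the superscript $(i)$). The first step is to bound the diagonal entries from above by $A_{\max}$, giving
\[
    \sum_{j=1}^n p_j \frac{r_j^2}{A_{jj}}
    \;\geq\;
    \frac{1}{A_{\max}} \cdot \frac{\sum_j |r_j|^{\ell+2}}{\sum_j |r_j|^{\ell}} .
\]
The task then reduces to a purely combinatorial inequality on the residual vector:
\[
    \frac{\sum_j |r_j|^{\ell+2}}{\sum_j |r_j|^{\ell}} \;\geq\; \frac{1}{n}\sum_j |r_j|^2 = \frac{\norm{r}{2}^2}{n}.
\]

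This inequality is the main (though mild) obstacle. We will prove it by Chebyshev's sum inequality. The sequences $a_j = |r_j|^\ell$ and $b_j = |r_j|^2$ are similarly ordered, since both are increasing functions of $|r_j|$ for $\ell>0$. Chebyshev therefore gives
\[
    n\sum_j a_j b_j \;\geq\; \Big(\sum_j a_j\Big)\Big(\sum_j b_j\Big),
\]
which is exactly the claim after dividing by $n\sum_j a_j > 0$. The denominator is positive because we restrict attention to $r \neq 0$, as noted in the background.

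As an alternative route, we could write $\sum_{j,m} (a_j - a_m)(b_j - b_m) \geq 0$ directly, since each term is nonnegative by monotonicity. A third option is to interpret the left-hand side as $\E_p[|r_k|^2]$ under the tilted distribution $p$ and argue that tilting by an increasing weight increases the mean of an increasing function.

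Finally, we combine the pieces using the lower bound $\norm{r}{2}^2 \geq \lmin \Anorm{e}^2$ from~\eqref{eq:residual-energy-bounds}. This shows the expected progress term is at least $\lmin \Anorm{e^{(i)}}^2/(n A_{\max})$. Substituting into~\eqref{eq:expected-progress-general} gives the stated contraction factor $1 - \lmin(A)/(n A_{\max})$. The case $r^{(i)}=0$ is trivial, since both sides vanish.
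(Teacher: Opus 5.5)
Your proposal is correct and matches the paper's proof: bound $A_{jj}\le A_{\max}$, reduce to $\|r\|_{\ell+2}^{\ell+2}/\|r\|_\ell^\ell \ge \|r\|_2^2/n$, prove this by Chebyshev's sum inequality on similarly ordered sequences, and finish with $\|r\|_2^2 \ge \lmin \Anorm{e}^2$. The paper first normalizes to $p_j = r_j^2/\|r\|_2^2$ and applies Chebyshev to $(p_j)$ and $(p_j^{\ell/2})$, which is the same inequality up to scaling, so applying it directly to $|r_j|^\ell$ and $|r_j|^2$ as you do is just a slightly more streamlined version of the same argument.
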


\begin{proof}
From~\eqref{eq:expected-progress-general}, the expected progress is
$\E[\mathrm{Progress}] = \sum_j P(j)\,(r_j^{(i)})^2/A_{jj}$.
Bounding $A_{jj} \leq A_{\max}$:
$$
\E[\mathrm{Progress}] \;\geq\;
\frac{1}{A_{\max}}\cdot\frac{\sum_j |r_j^{(i)}|^{\ell+2}}{\sum_m |r_m^{(i)}|^\ell}
\;=\;
\frac{1}{A_{\max}}\cdot\frac{\|r^{(i)}\|_{\ell+2}^{\ell+2}}{\|r^{(i)}\|_\ell^\ell}.
$$
It remains to show that
\begin{equation}
\label{eq:norm-ratio-bound}
\frac{\|r\|_{\ell+2}^{\ell+2}}{\|r\|_\ell^\ell} \;\geq\; \frac{\|r\|_2^2}{n}
\qquad \text{for every nonzero } r \in \R^n \text{ and every } \ell > 0.
\end{equation}
Define $p_j = r_j^2/\|r\|_2^2$, which is a probability distribution on $\{1,\ldots,n\}$.
Then $|r_j|^{2k} = p_j^k\,\|r\|_2^{2k}$ for every $k \geq 0$, so
\[
\|r\|_\ell^\ell = \|r\|_2^\ell \sum_j p_j^{\ell/2},
\qquad
\|r\|_{\ell+2}^{\ell+2} = \|r\|_2^{\ell+2} \sum_j p_j^{(\ell+2)/2},
\]
and~\eqref{eq:norm-ratio-bound} reduces to
\begin{equation}
\label{eq:cheb-target}
n \sum_j p_j^{(\ell+2)/2} \;\geq\; \sum_j p_j^{\ell/2},
\end{equation}
since substituting the norm identities into~\eqref{eq:norm-ratio-bound}
and simplifying by $\|r\|_2^2$ gives
$\sum_j p_j^{(\ell+2)/2} / \sum_j p_j^{\ell/2} \geq 1/n$.
Setting $\alpha = \ell/2 \geq 0$, both sequences $(p_j)_{j=1}^n$ and $(p_j^\alpha)_{j=1}^n$ are
similarly ordered (the function $t \mapsto t^\alpha$ is non-decreasing on $[0,1]$ for $\alpha \geq 0$).
Chebyshev's sum inequality\footnote{For real sequences
$a_1 \leq \cdots \leq a_n$ and $b_1 \leq \cdots \leq b_n$,
$n\sum_i a_i b_i \geq (\sum_i a_i)(\sum_i b_i)$; see e.g.
\cite[Theorem 43]{hardy1952inequalities}.} therefore gives
\[
n \sum_j p_j \cdot p_j^\alpha
\;\geq\;
\Bigl(\sum_j p_j\Bigr)\Bigl(\sum_j p_j^\alpha\Bigr)
\;=\; \sum_j p_j^\alpha,
\]
using $\sum_j p_j = 1$. This is~\eqref{eq:cheb-target}, hence~\eqref{eq:norm-ratio-bound}.
Combining with $\|r^{(i)}\|_2^2 \geq \lmin(A)\,\Anorm{e^{(i)}}^2$ yields the claim.
\end{proof}

Theorem~\ref{thm:baseline-power} gives the \emph{same} worst-case bound as uniform sampling (Theorem~\ref{thm:uniform}). Residual-weighting is guaranteed not to hurt, but these baseline bounds provide no incentive to use it. Section~\ref{sec:sharp} shows that the gap between these bounds and the empirical convergence rate is quantified by the inverse participation ratio.

\subsection{Sharpened Bounds via the Inverse Participation Ratio}
\label{sec:sharp}

\begin{definition}[Inverse Participation Ratio]
\label{def:ipr}
For a nonzero vector $r \in \R^n$, the \emph{inverse participation ratio} (IPR) is
\begin{equation}
\label{eq:ipr}
\nu^2(r) = \frac{n \cdot \norm{r}{4}^4}{\norm{r}{2}^4} = \frac{n \sum_j r_j^4}{\left(\sum_j r_j^2\right)^2}.
\end{equation}
\end{definition}

The squared form reflects the fact that the quantity naturally enters bounds
as $\nu^2 = (\sqrt{n}\,\|r\|_4/\|r\|_2)^2$; writing it unsquared as
$\nu = \sqrt{n}\,\|r\|_4/\|r\|_2$ is occasionally useful (e.g., in
Cauchy--Schwarz bounds) but we use $\nu^2$ by default. The IPR has the
following properties.

\begin{lemma}[Properties of the IPR]
\label{lem:ipr-properties}
For any nonzero $r \in \R^n$:
\begin{enumerate}[label=(\roman*)]
    \item $\nu^2(r) \geq 1$, with equality if and only if all $|r_j|$ are equal.
    \item $\nu^2(r) \leq n$, with equality if and only if $r$ has exactly one nonzero component.
    \item $\nu^2(r)$ is invariant under uniform scaling: $\nu^2(cr) = \nu^2(r)$ for $c \neq 0$.
    \item $\nu^2(r)$ depends only on the \emph{shape} of the distribution of $|r_j|^2$, not its magnitude.
\end{enumerate}
\end{lemma}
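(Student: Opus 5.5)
The plan is to work throughout with the normalized weights $p_j = r_j^2/\norm{r}{2}^2$, which form a probability distribution on $\{1,\ldots,n\}$; this is the same device used in the proof of Theorem~\ref{thm:baseline-power}. In these variables
\[
\nu^2(r) = n\sum_j p_j^2 ,
\]
so every claim becomes a statement about the quantity $\sum_j p_j^2$ on the probability simplex. Parts (iii) and (iv) follow at once from this representation. Replacing $r$ by $cr$ with $c\neq 0$ multiplies numerator and denominator of~\eqref{eq:ipr} by $c^4$. Equivalently, $p_j$ is unchanged, so $\nu^2$ is a function of $(p_1,\ldots,p_n)$ alone. That function is the normalized profile of $|r_j|^2$, and it is symmetric under permutation of indices. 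This is exactly the ``shape'' statement in (iv).

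For (i), I will apply Cauchy--Schwarz in the form $1 = (\sum_j p_j\cdot 1)^2 \le n\sum_j p_j^2$. This gives $\nu^2 \ge 1$. Equality in Cauchy--Schwarz holds exactly when $(p_j)$ is proportional to $(1,\ldots,1)$, that is, when all $p_j = 1/n$. This is equivalent to all $|r_j|$ being equal. An alternative route is the special case $\alpha=1$ of the Chebyshev inequality~\eqref{eq:cheb-target}, but Cauchy--Schwarz gives the equality case more cleanly.

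For (ii), I will use $0\le p_j\le 1$, which gives $p_j^2 \le p_j$ and hence $\sum_j p_j^2 \le \sum_j p_j = 1$. So $\nu^2\le n$. Equality forces $p_j^2 = p_j$ for every $j$, so each $p_j\in\{0,1\}$. Since the $p_j$ sum to one, exactly one of them equals $1$, which means $r$ has exactly one nonzero component. For the converse directions of (i) and (ii), I will substitute directly into~\eqref{eq:ipr}. If $|r_j| = c$ for all $j$, then $\nu^2 = n\cdot nc^4/(nc^2)^2 = 1$. If $r = c\,e_k$, then $\nu^2 = n c^4/c^4 = n$.

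None of these steps is a real obstacle. The only point needing care is tracking the equality cases, in particular translating ``$p$ uniform'' back to ``all $|r_j|$ equal'' when some components of $r$ may have opposite signs. This is harmless because $p_j$ depends only on $|r_j|$.
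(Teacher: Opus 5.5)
Your proposal is correct and follows the paper's proof essentially step for step. Part (i) is the same Cauchy--Schwarz inequality against the all-ones vector, which the paper writes unnormalized as $(\sum_j r_j^2)^2 \le n\sum_j r_j^4$; (iii)--(iv) use the same representation $\nu^2(r) = n\sum_j p_j^2$; and your bound $p_j^2 \le p_j$ in (ii) is the paper's chain $\sum_j r_j^4 \le (\max_j r_j^2)\sum_j r_j^2 \le (\sum_j r_j^2)^2$ collapsed into one step, with an equivalent equality case.
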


\begin{proof}
\textit{(i)} By Cauchy--Schwarz applied to the vectors
$(1, \ldots, 1)$ and $(r_1^2, \ldots, r_n^2)$,
\[
\Bigl(\sum_j r_j^2\Bigr)^2 = \Bigl(\sum_j 1 \cdot r_j^2\Bigr)^2
\;\leq\; n \sum_j r_j^4,
\]
giving $\nu^2(r) \geq 1$. Equality in Cauchy--Schwarz holds iff
$(r_1^2, \ldots, r_n^2)$ is a scalar multiple of $(1, \ldots, 1)$,
i.e., iff all $|r_j|$ are equal.

\textit{(ii)} For any $r \in \R^n$,
\[
\sum_j r_j^4
\;\leq\; (\max_j r_j^2) \sum_j r_j^2
\;\leq\; \Bigl(\sum_j r_j^2\Bigr)^2,
\]
where the first inequality bounds each $r_j^2$ in $r_j^4 = r_j^2 \cdot r_j^2$
by the maximum, and the second uses $\max_j r_j^2 \leq \sum_j r_j^2$ for
non-negative terms. Hence $\nu^2(r) \leq n$. Equality throughout requires
$\max_j r_j^2 = \sum_j r_j^2$, which (for non-negative summands) holds iff
exactly one $r_j$ is nonzero.

\textit{(iii)} For $c \neq 0$,
$\nu^2(cr) = n\,c^4 \|r\|_4^4 / (c^2 \|r\|_2^2)^2 = \nu^2(r)$.

\textit{(iv)} Follows from (iii): the normalized squared residual
$p_j = r_j^2/\|r\|_2^2$ is invariant under uniform scaling, and a direct
computation gives $\nu^2(r) = n\sum_j p_j^2$, which depends only on $(p_j)$.
\end{proof}

The particular form of $\nu^2$ is not a design choice but a consequence of
the proof. The one-step progress identity~\eqref{eq:progress-identity}
combined with power-weighted sampling $P(k=j) = r_j^2/\norm{r}{2}^2$
produces the ratio $\norm{r}{4}^4/\norm{r}{2}^2$ as a natural progress
lower bound (derivation in the proof of Theorem~\ref{thm:sharp-power}).
The question of what this ratio equals is forced: to compare with uniform
sampling we need to express it as a multiple of $\norm{r}{2}^2/n$, and
the multiple that appears is exactly $\nu^2$.
Other scalar measures of non-uniformity are possible, but this one is forced by
the expected-progress identity; attempts to use
$\|r\|_\infty / (\|r\|_2/\sqrt n)$ or $\|r\|_1 / (\sqrt n \|r\|_2)$
produce measures of non-uniformity but do not arise from the proof
and do not give the sharpened bound.

The IPR admits three complementary interpretations.
As a \emph{localization or participation} measure, it is a standard tool
in physics for quantifying how many sites or degrees of freedom meaningfully
participate in a state~\cite{evers2008anderson}: $\nu^2 \approx 1$ means
delocalized, $\nu^2 \gg 1$ means localized on a few components.
As an \emph{effective support}, treating $p_j = r_j^2/\|r\|_2^2$ as a
probability distribution gives $\nu^2(r) = n\sum_j p_j^2$, so
$n/\nu^2(r) = 1/\sum_j p_j^2 = e^{H_2(p)}$ is the order-2 effective
support of $p$, where $H_2(p) = -\log\sum_j p_j^2$ is the R\'enyi-2
entropy~\cite{renyi1961measures}; a residual spread uniformly over $5$
components has $\nu^2 = n/5$.
Finally, a direct computation gives $\nu^2(r) = 1 + \mathrm{Var}(r_j^2)/\bar{r^2}^{\,2}$
with mean and variance taken over a uniform index $j$, so $\nu^2$ is the
\emph{squared coefficient of variation} of $(r_j^2)$ and $\nu^2 > 1$ iff
the squared residuals are not all equal. For our purposes:
$\nu^2 \approx 1$ means biased sampling has little advantage, while
$\nu^2 \gg 1$ means a few components dominate and biased sampling can
concentrate effort where it matters. 

\subsection{Sharpened Convergence for Power-Weighted Jacobi ($\ell = 2$)}

While the baseline bounds in the previous section apply to multiple weighting parameters, the choice $\ell=2$ possesses a unique analytical advantage. When sampling probabilities are proportional to the squared residual components, $P(k=j) = r_j^2/\|r\|_2^2$, they form a distribution whose expected progress naturally scales with the variance of those squared residuals. This choice causes the non-uniformity of the residual to drop directly out of the one-step expectation as a clean scalar multiplier, allowing the inverse participation ratio to act as the exact bridge between the empirical spread of the residual and the theoretical convergence rate.

\begin{theorem}[Sharpened Power-Weighted Convergence]
\label{thm:sharp-power}
Let $A \in \R^{n \times n}$ be SPD. Under power-weighted randomized Jacobi with $\ell = 2$ (probability $\propto r_j^2$):
\begin{equation}
\label{eq:sharp-power}
\E[\Anorm{e^{(i+1)}}^2 \mid x^{(i)}] \leq \left(1 - \frac{\nu_i^2 \cdot \lmin(A)}{n \cdot A_{\max}}\right) \Anorm{e^{(i)}}^2
\end{equation}
where $\nu_i^2 = \nu^2(r^{(i)}) = n\norm{r^{(i)}}{4}^4 / \norm{r^{(i)}}{2}^4$ is the IPR of the current residual.

In particular, the expected per-step progress is amplified by $\nu_i^2$ relative to uniform sampling, replacing the uniform contraction factor of $(1-\alpha)$ by $(1 - \nu_i^2\alpha)$ where $\alpha = \lmin(A)/nA_{\max}$.
\end{theorem}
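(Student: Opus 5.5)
The plan is to follow the same template as Theorem~\ref{thm:baseline-power}, specialized to $\ell=2$. We start from the expected-progress formula~\eqref{eq:expected-progress-general} and substitute the sampling probabilities $p_j^{(i)} = (r_j^{(i)})^2/\norm{r^{(i)}}{2}^2$. This makes the expected progress exactly
\[
\sum_j \frac{(r_j^{(i)})^2}{\norm{r^{(i)}}{2}^2}\cdot\frac{(r_j^{(i)})^2}{A_{jj}} .
\]
Bounding $A_{jj}\le A_{\max}$ termwise, as in the baseline proof, gives the lower bound
\[
\frac{1}{A_{\max}}\cdot\frac{\norm{r^{(i)}}{4}^4}{\norm{r^{(i)}}{2}^2}.
\]

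Next we rewrite this ratio using Definition~\ref{def:ipr}. Since $\nu_i^2 = n\norm{r^{(i)}}{4}^4/\norm{r^{(i)}}{2}^4$, we have $\norm{r^{(i)}}{4}^4/\norm{r^{(i)}}{2}^2 = \nu_i^2\,\norm{r^{(i)}}{2}^2/n$. This is an exact identity, with no inequality lost. The expected progress is therefore at least $\nu_i^2\norm{r^{(i)}}{2}^2/(nA_{\max})$.

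We then apply the lower bound in~\eqref{eq:residual-energy-bounds}, namely $\norm{r^{(i)}}{2}^2\ge \lmin(A)\Anorm{e^{(i)}}^2$. This turns the progress bound into $\nu_i^2\lmin(A)\Anorm{e^{(i)}}^2/(nA_{\max})$. Subtracting from $\Anorm{e^{(i)}}^2$ yields~\eqref{eq:sharp-power}. The ``in particular'' statement then follows by comparing with Theorem~\ref{thm:uniform}, which gives contraction factor $1-\alpha$ with the same $\alpha=\lmin(A)/(nA_{\max})$.

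There is essentially no hard step, since the argument is a direct computation. The only point needing care is that the resulting factor is meaningful, i.e.\ $\nu_i^2\alpha\le 1$, so that the bound is not vacuous. This consistency follows automatically from the exact identity~\eqref{eq:progress-identity}: the expected progress can never exceed $\Anorm{e^{(i)}}^2$, because the expectation of a nonnegative quantity is nonnegative. The derived lower bound is therefore at most $\Anorm{e^{(i)}}^2$, and no separate argument is needed. One could also remark that Lemma~\ref{lem:ipr-properties}(i) gives $\nu_i^2\ge1$, so the sharpened bound always implies Theorem~\ref{thm:baseline-power} for $\ell=2$. As in the rest of the section, we restrict to $r^{(i)}\neq0$ so that the probabilities and $\nu_i^2$ are well-defined.
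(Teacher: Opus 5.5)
Your proposal is correct and follows the paper's proof step for step: substitute $p_j = r_j^2/\|r\|_2^2$ into the expected-progress formula, bound $A_{jj}\le A_{\max}$, rewrite $\|r\|_4^4/\|r\|_2^2$ exactly as $\nu_i^2\|r\|_2^2/n$, and finish with $\|r\|_2^2 \ge \lmin(A)\Anorm{e^{(i)}}^2$. Your extra remark that $\nu_i^2\alpha \le 1$ follows automatically from the nonnegativity of $\E[\Anorm{e^{(i+1)}}^2 \mid x^{(i)}]$ is a valid observation that the paper does not make explicitly.
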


\begin{proof}
Write $\nu^2 = \nu^2(r^{(i)})$ throughout. From~\eqref{eq:expected-progress-general},
the expected progress under $P(k=j) = r_j^2/\|r\|_2^2$ is
\begin{align}
\E[\mathrm{Progress} \mid x^{(i)}]
&= \sum_{j=1}^n \frac{r_j^2}{\|r\|_2^2}\cdot \frac{r_j^2}{A_{jj}}
 = \frac{1}{\|r\|_2^2}\sum_{j=1}^n \frac{r_j^4}{A_{jj}}.
\end{align}
Using $A_{jj} \leq A_{\max}$:
\begin{equation}
\E[\mathrm{Progress} \mid x^{(i)}]
\;\geq\; \frac{1}{A_{\max}}\cdot \frac{\|r\|_4^4}{\|r\|_2^2}.
\end{equation}
By the IPR identity $\|r\|_4^4 = (\nu^2/n)\,\|r\|_2^4$
(Definition~\ref{def:ipr}):
\begin{equation}
\E[\mathrm{Progress} \mid x^{(i)}]
\;\geq\; \frac{\nu^2}{n A_{\max}}\,\|r\|_2^2.
\end{equation}
Applying $\|r\|_2^2 \geq \lmin(A)\Anorm{e^{(i)}}^2$ from~\eqref{eq:residual-energy-bounds}:
\[
\E[\mathrm{Progress} \mid x^{(i)}]
\;\geq\; \frac{\nu^2\,\lmin(A)}{n\,A_{\max}}\,\Anorm{e^{(i)}}^2,
\]
and the claim follows from $\E[\Anorm{e^{(i+1)}}^2 \mid x^{(i)}] =
\Anorm{e^{(i)}}^2 - \E[\mathrm{Progress} \mid x^{(i)}]$.
\end{proof}

Setting $\nu_i^2 = 1$ recovers the baseline bound of
Theorem~\ref{thm:baseline-power} exactly, so the sharpened bound is
never weaker than the baseline and is strictly stronger whenever
$\nu_i > 1$. In an implementation, $\nu_i^2$ costs one additional pass
over the residual to compute, on top of the pass already needed for
the sampling step itself, so the IPR doubles as a per-iteration
diagnostic that reports in real time how much faster the iteration is
running than the uniform rate would predict. For concreteness, on the
\texttt{poisson} problem of Section~\ref{sec:experiments}
($n = 16{,}384$, $\lambda_{\min}/A_{\max} \approx 10^{-3}$) the
steady-state IPR $\nu^2 \approx 5.7$ amplifies the uniform-sampling
progress coefficient by roughly a factor of six, and the
early-iteration transient (where $\nu^2$ is in the tens to hundreds)
contributes the bulk of the actual residual reduction;
Figure~\ref{fig:combined_sync_bounds} shows that the realized
convergence on this problem is in fact faster than the steady-state
bound alone predicts, precisely because of this transient.


The one-step result (Theorem~\ref{thm:sharp-power}) holds unconditionally. To obtain a global (multi-step) rate, we need the IPR to remain bounded below across iterations.

\begin{corollary}[Global Rate Under Sustained Non-Uniformity]
\label{cor:global-rate}
If $\nu_i^2 \geq \bar{\nu}^2 > 1$ for all $i = 0, 1, \ldots, m-1$, then
$$
\E[\Anorm{e^{(m)}}^2] \leq \left(1 - \frac{\bar{\nu}^2 \cdot \lmin(A)}{n \cdot A_{\max}}\right)^m \Anorm{e^{(0)}}^2.
$$
\end{corollary}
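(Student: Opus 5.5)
The plan is to iterate the one-step bound of Theorem~\ref{thm:sharp-power} via the tower property of conditional expectation, inducting on the step count. Set $\beta = 1 - \bar{\nu}^2\lmin(A)/(nA_{\max})$. First I will check that $\beta \in [0,1)$, so that the factor is a genuine contraction and monotonicity arguments are legitimate. Nonnegativity follows because the expected progress can never exceed $\Anorm{e^{(i)}}^2$ (the left-hand side of \eqref{eq:sharp-power} is nonnegative). It also follows directly: $\bar{\nu}^2 \le \nu_i^2 \le n$ by Lemma~\ref{lem:ipr-properties}(ii), and $\lmin(A) \le A_{\min} \le A_{\max}$ because $A_{jj} = e_j^TAe_j \ge \lmin$.

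For the inductive step, I fix $i \le m-1$. Since $\nu_i^2 \ge \bar{\nu}^2$ by hypothesis, the factor in \eqref{eq:sharp-power} satisfies $1 - \nu_i^2\alpha \le \beta$, with $\alpha = \lmin(A)/(nA_{\max})$. Hence
\[
\E[\Anorm{e^{(i+1)}}^2 \mid x^{(i)}] \le \beta\,\Anorm{e^{(i)}}^2 .
\]
Taking total expectation gives $\E\Anorm{e^{(i+1)}}^2 \le \beta\,\E\Anorm{e^{(i)}}^2$. Chaining these inequalities from $i=0$ to $m-1$, with $x^{(0)}$ deterministic, yields the claim. Iterations where $r^{(i)}=0$ are harmless: the error is already zero and stays zero, so the inequality holds trivially there.

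The main obstacle is the meaning of the hypothesis. $\nu_i^2$ is a random variable depending on the sample path, so ``for all $i$'' must be read as holding almost surely on every trajectory, and this is how I will state it. Under that reading the pointwise bound $1-\nu_i^2\alpha \le \beta$ holds path-by-path before the expectation is taken, and the tower argument goes through cleanly. If the hypothesis held only with some probability, conditioning on that event would distort the distribution of the iterates, and the argument would fail.
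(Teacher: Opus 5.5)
Your proposal is correct and follows the argument the paper intends. The paper states the corollary without proof, as a direct consequence of Theorem~\ref{thm:sharp-power}: bound each conditional factor $1-\nu_i^2\alpha$ by the constant $1-\bar\nu^2\alpha$, take total expectation, and chain over $i=0,\dots,m-1$. Two details you supply are left unstated in the paper: checking that the constant factor is nonnegative, so the chained inequalities can be multiplied, and reading the hypothesis as holding almost surely along the trajectory.
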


Whether $\nu_i^2 \geq \bar{\nu}^2$ holds in practice depends on the problem. We investigate this numerically in Section~\ref{sec:experiments}. Proving it analytically for specific matrix classes (e.g., discretized PDEs with localized forcing) is an interesting open direction.
Without the sustained non-uniformity assumption, we can still obtain a global bound by using the IPR as a per-step weight.

\begin{corollary}[Global Rate via Geometric Mean of IPR]
\label{cor:geometric-mean}
After $m$ iterations,
$$
\E[\Anorm{e^{(m)}}^2] \leq \prod_{i=0}^{m-1}\left(1 - \frac{\nu_i^2 \cdot \lmin(A)}{n \cdot A_{\max}}\right) \Anorm{e^{(0)}}^2.
$$
\end{corollary}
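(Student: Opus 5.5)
The plan is to iterate the one-step bound of Theorem~\ref{thm:sharp-power} with the tower property of conditional expectation, as in the proof of Corollary~\ref{cor:global-rate}. The new difficulty is that each $\nu_i^2=\nu^2(r^{(i)})$ is a random variable determined by $x^{(i)}$. So the per-step contraction factors cannot simply be pulled out of the expectation and multiplied.

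Write $\alpha=\lmin(A)/(nA_{\max})$ and $c_i = 1-\nu_i^2\alpha$. First I will check that $c_i\in[0,1]$. By Lemma~\ref{lem:ipr-properties}(ii) we have $\nu_i^2\alpha\le n\alpha=\lmin/A_{\max}$. Since $\lmin\le e_j^TAe_j=A_{jj}\le A_{\max}$, this gives $c_i\ge 0$, and $c_i\le 1$ is immediate. Theorem~\ref{thm:sharp-power} states exactly that $\E[\Anorm{e^{(i+1)}}^2\mid x^{(i)}]\le c_i\,\Anorm{e^{(i)}}^2$, where $c_i$ is $x^{(i)}$-measurable. Iterations with $r^{(i)}=0$ are absorbing, and there $c_i$ may be set to $1$ by convention.

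If the $\nu_i^2$ are regarded as given numbers, for instance the realized IPR trajectory as in the experiments or deterministic lower bounds on it, then the factors are constants. In that case I peel off one step at a time:
\[
\E[\Anorm{e^{(m)}}^2]=\E\bigl[\E[\Anorm{e^{(m)}}^2\mid x^{(m-1)}]\bigr]\le c_{m-1}\E[\Anorm{e^{(m-1)}}^2],
\]
and induct down to $\Anorm{e^{(0)}}^2$. This gives the displayed product.

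For the genuinely random case I will prove the statement in its natural rigorous form. The idea is to build a supermartingale. On the event where all $c_t>0$, set $M_i=\Anorm{e^{(i)}}^2/\prod_{t<i}c_t$. The product is $x^{(i)}$-measurable, so the one-step bound gives
\[
\E[M_{i+1}\mid x^{(i)}]=\frac{\E[\Anorm{e^{(i+1)}}^2\mid x^{(i)}]}{\prod_{t\le i}c_t}\le M_i .
\]
Taking expectations and inducting yields
\[
\E\Bigl[\Anorm{e^{(m)}}^2\big/\textstyle\prod_{t<m}c_t\Bigr]\le\Anorm{e^{(0)}}^2,
\]
which is the precise meaning of the corollary when the $\nu_i^2$ are path-dependent. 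The degenerate case where some $c_t=0$ forces $\Anorm{e^{(t+1)}}^2=0$ almost surely, so it can be handled separately.

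The main obstacle is this measurability issue: the statement as printed puts a random product outside the expectation. I would handle it by presenting the supermartingale inequality as the core result and the printed form as its reading along a fixed IPR trajectory. I would also note that setting all $c_t$ to the constant $1-\bar\nu^2\alpha$ recovers Corollary~\ref{cor:global-rate}.
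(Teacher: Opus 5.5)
Your argument is correct and more careful than the paper's. The paper states Corollary~\ref{cor:geometric-mean} without proof. Its implicit argument is exactly your peel-off induction via the tower property, with the $\nu_i^2$ silently treated as constants.

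You are right that the key step $\E[c_i\Anorm{e^{(i)}}^2]=c_i\,\E[\Anorm{e^{(i)}}^2]$ is only valid for deterministic $c_i$. So the printed inequality is rigorous only when each $\nu_i^2$ is replaced by a deterministic almost-sure lower bound $\underline{\nu}_i^2$, which amounts to a time-varying Corollary~\ref{cor:global-rate}. Your check that $c_i\ge 0$, which is needed to chain the inequalities, is also absent from the paper.

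Your weighted supermartingale inequality is what survives for genuinely path-dependent IPRs. Integrability is automatic, since $\Anorm{e^{(i)}}^2\le\Anorm{e^{(0)}}^2$ and $c_t\ge 1-\lmin/A_{\max}>0$ unless $A=\lmin I$. It also yields an unweighted bound. With $P^\ast=\operatorname{ess\,sup}\prod_{t<m}c_t$ you have $\Anorm{e^{(m)}}^2\le P^\ast\,\Anorm{e^{(m)}}^2/\prod_{t<m}c_t$, hence $\E[\Anorm{e^{(m)}}^2]\le P^\ast\Anorm{e^{(0)}}^2$. This is at least as sharp as iterating per-step essential bounds.

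It does not give $\E[\Anorm{e^{(m)}}^2]\le\E[\prod_t c_t]\,\Anorm{e^{(0)}}^2$, because the $c_t$ can be positively correlated with the error. Present it as a weighted substitute for the printed form rather than as its ``precise meaning''.

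Two repairs are needed.

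\emph{Filtration.} The product $\prod_{t\le i}c_t$ is not $\sigma(x^{(i)})$-measurable in general. For diagonal $A$ the coordinate updates commute, so $x^{(2)}$ does not determine $x^{(1)}$, and hence does not determine $c_1$. Condition instead on $\mathcal{F}_i=\sigma(k_0,\ldots,k_{i-1})$. Theorem~\ref{thm:sharp-power} holds verbatim conditionally on $\mathcal{F}_i$, because the law of $k_i$ given the history depends only on $x^{(i)}$.

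\emph{Realized trajectory.} Drop ``the realized IPR trajectory'' as a way of making the factors constant. Conditioning on the realized values $\nu_1^2,\ldots,\nu_{m-1}^2$ biases the choices $k_0,\ldots,k_{m-2}$ that produced them. The one-step bound need not hold under that conditioning. Only deterministic almost-sure bounds give the printed product form.
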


\noindent
Taking logarithms, the effective convergence rate is governed by the arithmetic mean of $\log(1 - \nu_i^2 \lmin/(nA_{\max}))$, which is dominated by the average IPR.

\section{Asynchronous Power-Weighted Jacobi}
\label{sec:async}

The synchronous result in Theorem~\ref{thm:sharp-power} sharpens the
uniform-sampling bound by exactly the IPR, giving a clean theoretical
separation between residual-weighted and uniform sampling at every
step. A natural next question is whether the same analytical leverage
carries over to asynchronous execution, where the residual driving the
sampling distribution may be stale relative to the iterate being
updated. The question is not merely academic: residual-weighted
sampling is most appealing precisely in the high-concurrency regime
where asynchrony is unavoidable, and the synchronous bound is silent
about what happens there.

This section extends the sharpened synchronous analysis to the
asynchronous setting. We first review the Avron--Druinsky--Gupta (ADG)
framework~\cite{avron2015revisiting} for uniform asynchronous Jacobi,
then identify three specific obstacles that arise under
residual-weighted sampling and show how each can be handled under a
global bound on the IPR trajectory. The result is a convergence
theorem structurally analogous to ADG's Theorem~2a, with the
uniform-sampling cross-term parameter $\rho$ replaced by a dynamic
parameter $\rho_{\text{dyn}}$ and with the IPR entering both the
progress coefficient and the allowed-delay window. The theorem should
be read as a first asynchronous convergence result for power-weighted
sampling rather than as a strict generalization of the uniform theory;
several aspects of the bound are provably loose and are flagged as
sharpening targets at the end of the section.


To match the notation of Section~\ref{sec:background}, we write the asynchronous iteration in component form. At step $j$ the worker reads a stale iterate $x^{(s(j))}$ (where $s(j) \leq j$ is the step at which the read completed), samples a component $k_j$ according to a probability distribution that may depend on the stale residual $\tilde r^{(j)} = b - A x^{(s(j))}$, computes the Jacobi update using the stale residual's $k_j$-th component, and commits the result to the current iterate $x^{(j)}$:
\begin{equation}
\label{eq:async-update}
x^{(j+1)} = x^{(j)} + \beta\cdot\frac{\tilde r_{k_j}^{(j)}}{A_{k_j, k_j}}\, e_{k_j}.
\end{equation}
The bounded-delay condition $j - s(j) \leq \tau$ limits how stale any read can be; $\beta \in (0, 1]$ is an optional step-size parameter, with $\beta = 1$ corresponding to a full Jacobi step. Setting $s(j) = j$ for all $j$ (no staleness) and $\beta = 1$ recovers the synchronous iteration~\eqref{eq:update}. The update is applied to $x^{(j)}$, not $x^{(s(j))}$; the read is stale but the write is current.

\begin{algorithm}[ht]
\small
\caption{Asynchronous power-weighted randomized Jacobi (per-worker loop)}
\label{alg:async-power-jacobi}
\begin{algorithmic}[1]
\Require SPD matrix $A$, RHS $b$, shared iterate $x$, shared
         residual $r$, exponent $\ell > 0$, step size
         $\beta \in (0, 1]$, read mode $\in$ \{consistent,
         inconsistent\}
\While{not converged}
  \If{read mode is consistent}
    \State $r_{\text{local}} \gets$ atomic snapshot of $r$
    \Comment{Used for both sampling and update value}
  \Else
    \State $r_{\text{local}} \gets r$ via \texttt{\#pragma omp atomic read}
    \Comment{Live; second read happens after sampling}
  \EndIf
  \State Sample $k \in \{1, \ldots, n\}$ with
         $P(k = j) = |r_{\text{local},j}|^\ell / \sum_m |r_{\text{local},m}|^\ell$
  \If{read mode is inconsistent}
    \State $r_k \gets$ atomic read of $r_k$
    \Comment{Second, possibly-stale-different read}
  \Else
    \State $r_k \gets r_{\text{local},k}$
  \EndIf
  \State $\delta \gets \beta \cdot r_k / A_{kk}$
  \State Atomic update: $x_k \gets x_k + \delta$
  \State Atomically apply stencil updates to $r$ at $k$ and
         its neighbors in the matrix graph
\EndWhile
\end{algorithmic}
\end{algorithm}

For comparison with the ADG literature, our $x^{(s(j))}$ corresponds to their $x_{k(j)}$ and our $k_j$ to their direction $d_j = e_{k_j}$; our $\tilde r_{k_j}^{(j)}/A_{k_j,k_j}$ corresponds to their $\gamma_j = (x^* - x_{k(j)}, d_j)_A$ (for unit-diagonal $A$ and $d_j = e_{k_j}$, $(x^* - x_{k(j)}, e_{k_j})_A = [A(x^* - x_{k(j)})]_{k_j} = \tilde r_{k_j}^{(j)}$). Translation between the two conventions is purely notational.
Under uniform direction sampling ($P(k_j = r) = 1/n$), consistent reads, and an independent-delays assumption (ADG Assumption A-4), Avron et al.\ establish the following result.

\begin{theorem}[ADG Theorem~2a]
\label{thm:avron-async}
Consider iteration~\eqref{eq:async-update} with uniform sampling, $\beta = 1$, unit diagonal $A$, consistent reads, and bounded delay $\tau$. Let $\rho = \frac{1}{n}\|A\|_\infty$. Provided $2\rho\tau < 1$, for every epoch of length $m \geq 0.693\,n/\lmax$,
$$
E_m \leq \left(1 - \frac{1 - 2\rho\tau}{2\kappa}\right) E_0,
$$
where $E_j = \E[\Anorm{e^{(j)}}^2]$.
\end{theorem}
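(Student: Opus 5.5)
The proof runs one asynchronous step at a time, as in the synchronous analysis, and then absorbs the staleness error over an epoch. We assume unit diagonal, $\beta=1$, consistent reads, uniform $k_j$, and independence of $k_j$ from the delay pattern (A-4).

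\emph{One-step identity.} Expanding $\Anorm{e^{(j+1)}}^2$ with $\gamma_j=\tilde r^{(j)}_{k_j}$ gives
\[
\Anorm{e^{(j+1)}}^2=\Anorm{e^{(j)}}^2-2\gamma_j r^{(j)}_{k_j}+\gamma_j^2 .
\]
Here $r^{(j)}$ is the current residual. Write $r^{(j)}_{k_j}=\gamma_j+\bigl(r^{(j)}-\tilde r^{(j)}\bigr)_{k_j}$. This yields
\[
\Anorm{e^{(j+1)}}^2=\Anorm{e^{(j)}}^2-\gamma_j^2-2\gamma_j\bigl(r^{(j)}-\tilde r^{(j)}\bigr)_{k_j}.
\]
The staleness difference is $r^{(j)}-\tilde r^{(j)}=-\sum_{t=s(j)}^{j-1}\gamma_t A e_{k_t}$, because each intermediate commit changes the residual by $-\gamma_t Ae_{k_t}$.

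\emph{Taking expectations.} Condition on the stale read. Under uniform $k_j$, independent of the delays,
\[
\E\gamma_j^2=\tfrac1n\|\tilde r^{(j)}\|_2^2 .
\]
The cross term is a sum of at most $\tau$ terms $2\gamma_j\gamma_t A_{k_j k_t}$. Bound each by $|\gamma_j\gamma_t|\,|A_{k_jk_t}|\le \tfrac12(\gamma_j^2+\gamma_t^2)|A_{k_jk_t}|$. Averaging over the uniform $k_j$ turns $|A_{k_j k_t}|$ into a column sum divided by $n$, which is at most $\rho=\|A\|_\infty/n$. The cross term is therefore at most
\[
\rho\Bigl(\tau\,\E\gamma_j^2+\sum_{t=j-\tau}^{j-1}\E\gamma_t^2\Bigr).
\]

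\emph{Summing over an epoch.} Sum over $j=0,\dots,m-1$ and telescope. Each $\E\gamma_t^2$ appears at most $\tau$ times as a "past" term, so
\[
E_m\le E_0-(1-2\rho\tau)\sum_{j<m}\E\gamma_j^2 ,
\]
up to boundary terms, which have the favorable sign or are absorbed.

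\emph{Converting progress into contraction.} It remains to lower-bound $\sum_j\E\gamma_j^2=\tfrac1n\sum_j\E\|\tilde r^{(j)}\|_2^2$ by a multiple of $E_0$. Since $\Anorm{\cdot}$ is monotone only in expectation, I would follow ADG. Set $F_j=\tfrac1n\E\|r^{(s(j))}\|_2^2$ and use $\|r\|_2^2\ge\lmin\Anorm{e}^2$. Either the energy $E$ stays above $E_0/2$ throughout the epoch, so that the accumulated progress $m\cdot\tfrac{\lmin}{n}\cdot\tfrac{E_0}{2}$ exceeds $E_0/(2\kappa)$ once $m\gtrsim n/\lmax$; or the energy has already halved, which is even better. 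The constant $0.693=\ln 2$ indicates that ADG actually compare with the exact continuous-time decay $e^{-m\lmax/n}$. I would reproduce this by noting that, since $\E\Anorm{e}^2$ is nonincreasing, the stale residuals satisfy $\|\tilde r\|_2^2\ge\lmin\Anorm{e^{(s(j))}}^2\ge\lmin\Anorm{e^{(j)}}^2$ in expectation. I would then use the upper bound $\|r\|_2^2\le\lmax\Anorm e^2$ to show that the decay per step is at most $\lmax/n$. Consequently the energy cannot drop below $E_0/2$ before $m\approx 0.693\,n/\lmax$ steps, and $\lmin$ progress accumulates in the meantime.

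The main obstacle is this last epoch argument. The per-step cross-term bound is routine, but turning stale-residual progress into a clean factor $(1-(1-2\rho\tau)/(2\kappa))$ requires carefully handling expectations of products under A-4 and justifying the epoch length $\ln 2\cdot n/\lmax$.
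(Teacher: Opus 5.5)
Your skeleton (exact one-step identity, AM--GM on the in-flight terms, double counting over the delay window, then an ``error cannot collapse'' epoch argument) is the one the paper follows. Two steps, however, do not go through as written.

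\textbf{The cross term.} Averaging over $k_j$ is valid only for the $\gamma_t^2$ half. For the $\gamma_j^2$ half it fails, because $\gamma_j=\tilde r^{(j)}_{k_j}$ is itself a function of $k_j$. Averaging over $k_j$ gives $\frac1n\sum_r(\tilde r^{(j)}_r)^2|A_{rk_t}|$, which in general is only bounded by $\max_r|A_{rk_t}|\,\E\gamma_j^2$. That is an $O(1)$ factor rather than $\rho=O(1/n)$, and it destroys the $2\rho\tau<1$ window. The same loss appears in part (c) of the proof of Theorem~\ref{thm:async-power-weighted}, where averaging over $k_j$ produces $\|A_{k_l,:}\|_\infty$. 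For uniform sampling you must average over the in-flight index instead. For $t\ge s(j)$, commit $t$ happens after the consistent read, so under A-4 $k_t$ is uniform and independent of $(\tilde r^{(j)},k_j)$. Hence $\E_{k_t}|A_{k_jk_t}|=\frac1n\sum_s|A_{k_js}|\le\rho$. This is exactly where consistent reads and A-4 enter.

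\textbf{The epoch step.} This is the real gap. You invoke ``$\E\Anorm{e}^2$ is nonincreasing'' and the branch ``or the energy has already halved,'' but no such monotonicity is available for the asynchronous iteration. The one-step change equals $(\gamma_j-r^{(j)}_{k_j})^2-(r^{(j)}_{k_j})^2$, which is positive whenever $|\gamma_j-r^{(j)}_{k_j}|>|r^{(j)}_{k_j}|$. So $E_j\le E_0/2$ for some $j<m$ does not by itself give anything about $E_m$. The other ingredients you mention do not close the gap either:
\begin{itemize}
\item The bound $\E\gamma_j^2\le\frac{\lmax}{n}\E\Anorm{\tilde e^{(j)}}^2$ does not cap the per-step decay, since it concerns the stale error and ignores the cross terms.
\item The flat count $m\cdot\frac{\lmin}{n}\cdot\frac{E_0}{2}$ at $m=0.693\,n/\lmax$ yields only about $0.35/\kappa$.
\end{itemize}

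\textbf{The missing idea} is line-search optimality. For every step length along $e_{k_j}$,
\[
\Anorm{e^{(j+1)}}^2\ge\Anorm{e^{(j)}}^2-(r^{(j)}_{k_j})^2,
\]
regardless of staleness. Since $k_j$ is uniform and independent of the current iterate, this gives $E_{j+1}\ge(1-\lmax/n)E_j$. Then, writing $E_{s(j)}:=\E\Anorm{\tilde e^{(j)}}^2$, the bound $E_{s(j)}\ge(1-\lmax/n)^{s(j)}E_0\ge(1-\lmax/n)^{j}E_0$ follows from $s(j)\le j$ alone, with no monotonicity needed. Summing the geometric series,
\[
\sum_{j<m}E_{s(j)}\ge\frac{n}{\lmax}\bigl(1-(1-\lmax/n)^m\bigr)E_0\ge\frac{n}{2\lmax}E_0
\]
as soon as $(1-\lmax/n)^m\le\frac12$, which holds for $m\ge\ln 2\cdot n/\lmax\approx0.693\,n/\lmax$. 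Your telescoped bound, combined with $\E\gamma_j^2\ge\frac{\lmin}{n}E_{s(j)}$, gives $E_m\le E_0-(1-2\rho\tau)\frac{\lmin}{n}\sum_{j<m}E_{s(j)}$. Substituting the sum estimate into it yields exactly the factor $1-(1-2\rho\tau)/(2\kappa)$.
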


The proof architecture has three stages: a one-step progress lemma with
both lower and upper bounds on the expected squared inner product at
the stale state; an unrolled recursion that separates per-step
progress from cross-coupling damage between stale components; and an
epoch assembly that sums the recursion using the upper bound to ensure
the error does not collapse faster than the damage can accumulate.


Porting Theorem~\ref{thm:sharp-power} ($\ell = 2$) into this architecture, three specific mechanisms break.

\paragraph{Obstacle 1: The independent-delays assumption fails.} ADG Assumption A-4 requires that $s(j)$ is independent of the sequence of component choices $k_0, k_1, \ldots$. Under uniform sampling this is plausible; the scheduler's choice of when to read cannot influence which component gets sampled. Under power-weighted sampling the distribution over components at step $j$ is $P(k_j = r) = (\tilde r_r^{(j)})^2 / \|\tilde r^{(j)}\|_2^2$, which depends on the stale residual and hence on $s(j)$. The component choice and the delay are coupled through the stale iterate.

\paragraph{Obstacle 2: The cross-term bound loses its $1/n$ scaling.} In the uniform proof, for any fixed past component $k_t = l$,
$$
\E[\,|A_{l, k_j}|\,] = \frac{1}{n}\sum_r |A_{lr}| \leq \frac{1}{n}\|A\|_\infty = \rho.
$$
The $1/n$ factor is what makes $\rho = O(1/n)$ for sparse matrices. Under power-weighted sampling the analogous expectation is $\sum_r ((\tilde r_r^{(j)})^2/\|\tilde r^{(j)}\|_2^2)\, |A_{lr}|$ and the favorable $1/n$ prefactor is lost. A residual concentrated on a single component would put all its probability mass on a single matrix row, giving no $n$-dependence at all.

\paragraph{Obstacle 3: The progress lemma needs an upper bound.} Theorem~\ref{thm:sharp-power} provides a \emph{lower} bound on expected progress. The async epoch assembly also requires an \emph{upper} bound to guarantee that delayed error terms are still large enough fractions of the initial error for the sum over the epoch to accumulate.

\vspace{1em}

Our main tool for handling these obstacles is to assume a uniform bound on the IPR throughout the epoch. This simultaneously (a) assumes away Obstacle 1 by imposing worst-case dynamics rather than tracking actual correlations, (b) partially repairs Obstacle 2 by allowing Cauchy--Schwarz to decouple residual from matrix, and (c) gives us a two-sided lemma that resolves Obstacle 3 directly.

\begin{definition}[Uniform IPR trajectory]
\label{assump:global-ipr}
There exist constants $1 \leq \nu_{\min}^2 \leq \nu_{\max}^2 \leq n$ such that for every step $j$ in the epoch,
$$
\nu_{\min}^2 \leq \nu^2(\tilde r^{(j)}) \leq \nu_{\max}^2.
$$
\end{definition}

Definition~\ref{assump:global-ipr} requires a priori bounds on the full
trajectory of IPRs, including during early iterations when the
residual non-uniformity is actively growing (see
Section~\ref{sec:experiments}). In an implementation one could compute
$\nu^2(\tilde r^{(j)})$ online and verify the assumption post hoc, but
it is an input to the theorem rather than an output.
Section~\ref{sec:experiments} verifies the assumption empirically
across all three test problems, with sustained $\nu^2$ values that
vary by less than $3\%$ across two decades of thread count
(Table~\ref{tab:hw-scaling}). What we get in exchange is a mechanical
path to a convergence theorem with the right structural form; closing
the gap between the assumption and the dynamics analytically is the
main open problem.

With $\ell = 2$ power-weighting at the stale state, letting $\tilde e^{(j)} = x^{(s(j))} - x^*$,
$$
\E[(\tilde e^{(j)}, e_{k_j})_A^2] = \sum_{r=1}^n \frac{(\tilde r_r^{(j)})^2}{\|\tilde r^{(j)}\|_2^2}\, (\tilde r_r^{(j)})^2 = \frac{\|\tilde r^{(j)}\|_4^4}{\|\tilde r^{(j)}\|_2^2} = \frac{\nu^2(\tilde r^{(j)})}{n}\|\tilde r^{(j)}\|_2^2.
$$
Applying $\lmin \Anorm{\tilde e^{(j)}}^2 \leq \|\tilde r^{(j)}\|_2^2 \leq \lmax \Anorm{\tilde e^{(j)}}^2$ gives a two-sided control on the conditional expected progress.

\begin{lemma}[Dynamic one-step lemma]
\label{lem:dynamic-async-1}
Under power-weighted sampling with $\ell = 2$,
$$
\frac{\nu^2(\tilde r^{(j)})\,\lmin}{n}\Anorm{\tilde e^{(j)}}^2 \;\leq\; \E[(\tilde e^{(j)}, e_{k_j})_A^2] \;\leq\; \frac{\nu^2(\tilde r^{(j)})\,\lmax}{n}\Anorm{\tilde e^{(j)}}^2.
$$
Under Definition~\ref{assump:global-ipr}, the same bounds hold with $\nu^2(\tilde r^{(j)})$ replaced by $\nu_{\min}^2$ (lower) and $\nu_{\max}^2$ (upper).
\end{lemma}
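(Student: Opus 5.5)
The plan is to compute the conditional expectation exactly and then sandwich it using the spectral inequalities. The computation is carried out conditional on the stale iterate $x^{(s(j))}$, so that $\tilde r^{(j)}$ and $\tilde e^{(j)}$ are fixed and the only randomness is the choice of $k_j$.

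\textbf{Step 1: identify the inner product with a residual entry.} By symmetry of $A$ and $\tilde r^{(j)} = b - A x^{(s(j))} = -A\tilde e^{(j)}$, we have $(\tilde e^{(j)}, e_{k_j})_A = e_{k_j}^T A \tilde e^{(j)} = -\tilde r^{(j)}_{k_j}$. The sign disappears on squaring, so $(\tilde e^{(j)}, e_{k_j})_A^2 = (\tilde r^{(j)}_{k_j})^2$. Note that this needs no unit-diagonal convention, because the quantity is the raw $A$-inner product.

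\textbf{Step 2: take the expectation over $k_j$.} With $P(k_j=r)=(\tilde r^{(j)}_r)^2/\|\tilde r^{(j)}\|_2^2$ (assuming $\tilde r^{(j)}\neq 0$; otherwise both sides vanish), the expectation is $\sum_r (\tilde r_r^{(j)})^4/\|\tilde r^{(j)}\|_2^2 = \|\tilde r^{(j)}\|_4^4/\|\tilde r^{(j)}\|_2^2$. Definition~\ref{def:ipr} then rewrites this as $(\nu^2(\tilde r^{(j)})/n)\,\|\tilde r^{(j)}\|_2^2$. This is exactly the identity displayed just before the lemma.

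\textbf{Step 3: convert to the energy norm.} Apply \eqref{eq:residual-energy-bounds} to $\tilde e^{(j)}$, which gives $\lmin\Anorm{\tilde e^{(j)}}^2 \le \|\tilde r^{(j)}\|_2^2 \le \lmax \Anorm{\tilde e^{(j)}}^2$. Multiplying through by the nonnegative factor $\nu^2(\tilde r^{(j)})/n$ yields both inequalities of the lemma.

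\textbf{Step 4: the uniform-trajectory version.} Under Definition~\ref{assump:global-ipr}, replace $\nu^2(\tilde r^{(j)})$ by $\nu_{\min}^2$ in the lower bound and by $\nu_{\max}^2$ in the upper bound. This is valid because each bound is monotone in $\nu^2$ and $\Anorm{\tilde e^{(j)}}^2\ge 0$.

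The main point needing care is the meaning of $\E$. The bounds hold conditionally on the stale state. Taking outer expectations would not preserve the form in terms of $\nu^2(\tilde r^{(j)})$, since $\nu^2$ is random. It would preserve the form with $\nu_{\min}^2$ and $\nu_{\max}^2$, which are deterministic constants, and that is the version the epoch argument will use. I would state this conditioning explicitly and note that the delay–sampling coupling (Obstacle 1) is irrelevant here, because the sampling distribution is defined from the same stale residual that appears in the bound.
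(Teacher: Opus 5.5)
Your proposal is correct and matches the paper's argument: the exact computation $\E[(\tilde e^{(j)}, e_{k_j})_A^2] = \|\tilde r^{(j)}\|_4^4/\|\tilde r^{(j)}\|_2^2 = (\nu^2(\tilde r^{(j)})/n)\,\|\tilde r^{(j)}\|_2^2$, followed by the spectral sandwich \eqref{eq:residual-energy-bounds} and monotone substitution of $\nu_{\min}^2$, $\nu_{\max}^2$. Your explicit remark that the expectation is conditional on the stale state is consistent with how the lemma is used through $\mathcal{F}_j$ in Step~3 of the proof of Theorem~\ref{thm:async-power-weighted}.
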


For the cross-term, under power-weighted sampling, for past component $k_t = l$ fixed,
$$
\E[\,|A_{l, k_j}|\,] = \sum_r \frac{(\tilde r_r^{(j)})^2}{\|\tilde r^{(j)}\|_2^2}\,|A_{lr}|
\leq \sqrt{\sum_r \left(\frac{(\tilde r_r^{(j)})^2}{\|\tilde r^{(j)}\|_2^2}\right)^{\!2}}\cdot\|A_{l,:}\|_2
= \frac{\nu(\tilde r^{(j)})}{\sqrt n}\,\|A_{l,:}\|_2
$$
by Cauchy--Schwarz. Defining $\rho_2 := \max_l \|A_{l,:}\|_2$ and using Definition~\ref{assump:global-ipr},
$$
\E[\,|A_{l, k_j}|\,] \leq \frac{\nu_{\max}\,\rho_2}{\sqrt n} =: \rho_{\text{dyn}}.
$$
For a sparse matrix with $c$ nonzeros per row and bounded entries, $\rho \approx cM/n = O(1/n)$ in the uniform case, while $\rho_{\text{dyn}} = \nu_{\max} M\sqrt c/\sqrt n = O(\nu_{\max}/\sqrt n)$ here. Even setting $\nu_{\max} = 1$ (flat residual) gives $\rho_{\text{dyn}} = O(1/\sqrt n)$, which is strictly worse than the ADG bound of $O(1/n)$. The dynamic analysis does not recover the uniform result as a special case; the Cauchy--Schwarz step throws away $\ell_1$-style summability that the uniform analysis uses directly. This is the most important sharpening target (see Section~\ref{sec:limitations}).

\subsection{Convergence Theorem}

\begin{theorem}[Asynchronous power-weighted Jacobi, $\ell = 2$]
\label{thm:async-power-weighted}
Consider the asynchronous iteration~\eqref{eq:async-update} with step-size $\beta = 1$, unit diagonal $A$, power-weighted sampling ($\ell = 2$, using the stale residual), consistent reads, and bounded delay $\tau$. Under Definition~\ref{assump:global-ipr}, let
$$
\rho_{\text{dyn}} = \frac{\nu_{\max}\,\rho_2}{\sqrt n}, \qquad \tilde\nu_\tau = 1 - 2\rho_{\text{dyn}}\tau.
$$
Provided $\tau < \sqrt n / (2\nu_{\max}\rho_2)$, for every epoch of length $m \geq 0.693\,n/(\nu_{\max}^2\lmax)$,
$$
E_m \leq \left(1 - \frac{\nu_{\min}^2}{\nu_{\max}^2} \cdot \frac{\tilde\nu_\tau}{2\kappa}\right) E_0.
$$
\end{theorem}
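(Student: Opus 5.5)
The plan is to follow the three-stage architecture of the ADG proof of Theorem~\ref{thm:avron-async} and replace each uniform-sampling ingredient by its dynamic counterpart: Lemma~\ref{lem:dynamic-async-1} for the progress term and $\rho_{\text{dyn}}$ for the cross term. Throughout we condition on the history and treat the trajectory bounds of Definition~\ref{assump:global-ipr} as holding pathwise. That is exactly what lets us bypass Obstacle~1: we never need independence of $s(j)$ and $k_j$. We only need that, conditional on the stale state, $k_j$ is drawn from the $\ell=2$ distribution of $\tilde r^{(j)}$. Also, $\nu_{\min}^2 \le \nu^2(\tilde r^{(j)}) \le \nu_{\max}^2$ holds whatever the delay was.

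\textbf{Stage 1 (one-step recursion).} With unit diagonal and $\beta=1$, write $\gamma_j = (\tilde e^{(j)}, e_{k_j})_A$. Expanding $\Anorm{e^{(j+1)}}^2$ gives
\[
\Anorm{e^{(j+1)}}^2 = \Anorm{e^{(j)}}^2 - 2\gamma_j (e^{(j)}-\tilde e^{(j)}, e_{k_j})_A - \gamma_j^2 .
\]
The stale-to-current difference is $e^{(j)} - \tilde e^{(j)} = \sum_{t=s(j)}^{j-1} \gamma_t e_{k_t}$, a sum of at most $\tau$ terms. The cross term is therefore bounded by $\sum_t |\gamma_j\gamma_t|\,|A_{k_t,k_j}|$. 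Bounding $2|\gamma_j\gamma_t| \le \gamma_j^2+\gamma_t^2$ and taking the conditional expectation over $k_j$ with $k_t=l$ fixed, the factor $|A_{l,k_j}|$ is controlled by $\rho_{\text{dyn}}$ through the Cauchy--Schwarz computation preceding the theorem. This step has to be handled as in ADG, where the $\gamma_j^2$ part is weighted by the $|A|$ entry and its expectation bounded. The result should be
\[
E_{j+1} \le E_j - \E[\gamma_j^2] + \rho_{\text{dyn}}\sum_{t=j-\tau}^{j-1}\bigl(\E[\gamma_j^2]+\E[\gamma_t^2]\bigr).
\]

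\textbf{Stage 2 (unrolling).} Summing from $j=0$ to $m-1$, each $\E[\gamma_t^2]$ appears in at most $2\tau$ cross terms, so
\[
E_m \le E_0 - (1-2\rho_{\text{dyn}}\tau)\sum_{j<m}\E[\gamma_j^2] = E_0 - \tilde\nu_\tau \sum_{j<m}\E[\gamma_j^2].
\]
The hypothesis $\tau < \sqrt n/(2\nu_{\max}\rho_2)$ is exactly $\tilde\nu_\tau > 0$. The lower half of Lemma~\ref{lem:dynamic-async-1} then gives $\E[\gamma_j^2] \ge (\nu_{\min}^2\lmin/n)\,\E\Anorm{\tilde e^{(j)}}^2$.

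\textbf{Stage 3 (epoch assembly).} This is the main obstacle, because it needs a lower bound on the stale errors $\E\Anorm{\tilde e^{(j)}}^2$ in terms of $E_0$. Following ADG, I would use the upper half of Lemma~\ref{lem:dynamic-async-1}. Under $\ell=2$ sampling, each step removes at most a fraction $\nu_{\max}^2\lmax/n$ of the stale error in expectation. Iterating this (up to the delay bookkeeping that ADG handle via monotonicity of $E_j$ under consistent reads) gives
\[
\E\Anorm{\tilde e^{(j)}}^2 \ge \bigl(1-\nu_{\max}^2\lmax/n\bigr)^{j} E_0 .
\]
Two cases follow. If some $E_j$ already falls below $E_0/2$ within the epoch, monotonicity finishes the argument, since the claimed factor is at least $1/2$. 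Otherwise, summing the geometric lower bound over $m \ge 0.693\,n/(\nu_{\max}^2\lmax)$ steps gives
\[
\sum_j (1-\nu_{\max}^2\lmax/n)^j \ge \tfrac{1}{2}\cdot \frac{n}{\nu_{\max}^2\lmax},
\]
because $(1-x)^{\ln 2/x} \le 1/2$. Combining this with Stage~2 yields
\[
E_m \le E_0 - \tilde\nu_\tau\,\frac{\nu_{\min}^2\lmin}{n}\cdot\frac{n}{2\nu_{\max}^2\lmax}\,E_0 ,
\]
which is the stated factor $1 - \frac{\nu_{\min}^2}{\nu_{\max}^2}\frac{\tilde\nu_\tau}{2\kappa}$.

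The delicate point is making the geometric lower bound rigorous with stale reads and residual-dependent probabilities. I would first try to copy ADG's argument verbatim, substituting the pathwise trajectory bound wherever they use uniform sampling.
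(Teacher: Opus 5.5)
Your architecture is the paper's: in-flight expansion of the one-step identity, AM--GM decoupling, conditional bounds from Lemma~\ref{lem:dynamic-async-1} and $\rho_{\text{dyn}}$, double counting to $1-2\rho_{\text{dyn}}\tau$, and a non-collapse argument based on the lemma's upper half. Your Stage~1 target is also the form the telescoping needs, with interference measured in units of $\E[\gamma_j^2]$ and the lemma's lower half applied only afterwards. The gap is in how you reach it. It sits at the one term where residual weighting genuinely matters: the current-interference piece $\E[|A_{k_t,k_j}|\gamma_j^2]$.

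Fixing $k_t=l$ and averaging over $k_j$ handles the $\gamma_t^2$ piece, but not this one, because $|A_{l,k_j}|$ and $\gamma_j^2=(\tilde r^{(j)}_{k_j})^2$ are both functions of $k_j$. Conditionally on $\mathcal F_j$ the term equals $\sum_r |A_{lr}|(\tilde r^{(j)}_r)^4/\|\tilde r^{(j)}\|_2^2$. So $|A_{l,\cdot}|$ is averaged against the fourth-power profile of the residual, not against the sampling distribution. Take $(\tilde r^{(j)}_l)^2=\sqrt n$ and every other squared entry equal to $1$. Then $\nu^2\approx 2$, yet the term is at least about $\frac12\E[\gamma_j^2\mid\mathcal F_j]$. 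That is a factor $1/(2\rho_{\text{dyn}})=\sqrt n/(2\nu_{\max}\rho_2)$ above $\rho_{\text{dyn}}\E[\gamma_j^2\mid\mathcal F_j]$, which exceeds $1$ whenever the theorem admits any $\tau\ge1$.

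You cannot borrow the paper's bound (c) in Step~3 either. It also averages over $k_j$ and gives only $\rho_{\text{dyn}}\|\tilde r^{(j)}\|_2^2=\rho_{\text{dyn}}\,(n/\nu^2(\tilde r^{(j)}))\,\E[\gamma_j^2\mid\mathcal F_j]$. That is a coefficient of at least $\rho_2\sqrt n/\nu_{\max}\ge 1$ per in-flight step, which cannot be absorbed into the progress term to produce $\tilde\nu_\tau$.

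The fix is to reverse the roles for this piece: condition on $\mathcal F_t$, hold $k_j$ fixed, and average over the in-flight choice $k_t\sim p(\tilde r^{(t)})$. Cauchy--Schwarz with $\nu(\tilde r^{(t)})\le\nu_{\max}$ then gives exactly $\rho_{\text{dyn}}$. This requires $k_t$ and $k_j$ to be conditionally independent given $\mathcal F_t$. That holds when $\tilde r^{(j)}$ is $\mathcal F_t$-measurable, i.e.\ when the read time $s(j)$ does not depend on $k_t$. This is a form of the independent-delays assumption, so your claim to bypass Obstacle~1 entirely fails for this term, and the hypothesis must be stated.

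Stage~3 is asserted rather than proved, and the paper's Step~5 does not fill it in: it drops the cross terms as if they were non-negative and compares with $E_j$ rather than with the stale error. The cross term $-2\gamma_j(e^{(j)}-\tilde e^{(j)},e_{k_j})_A$ has no sign. A lower bound must therefore subtract interference too:
$E_{j+1}\ge E_j-(1+\rho_{\text{dyn}}\tau)\E[\gamma_j^2]-\rho_{\text{dyn}}\sum_{t\in J_j}\E[\gamma_t^2]$.
The lemma's upper half controls $\E[\gamma_j^2]$ only through $\E\Anorm{\tilde e^{(j)}}^2$, so this delayed recurrence does not iterate to $(1-\nu_{\max}^2\lmax/n)^jE_0$.

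Monotonicity of $E_j$ is not available either. One stale update changes the error by $\tilde r_k(\tilde r_k-2r_k)$, which is positive once in-flight commits have pushed $r_k$ toward or past zero; this is the consistent-reads pile-on seen in the experiments. Your first case is therefore unsupported. It is also unnecessary: a geometric bound valid for all $j<m$ would let the geometric-sum case alone cover every $m\ge 0.693\,n/(\nu_{\max}^2\lmax)$.

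What the corrected Stage~1 supports rigorously is cruder. Stage~2 gives $E_s\le E_0$, hence $E_{j+1}\ge E_j-(1+2\rho_{\text{dyn}}\tau)(\nu_{\max}^2\lmax/n)E_0$. The stale errors then stay above $E_0/2$ for $j$ up to roughly $n/(2(1+2\rho_{\text{dyn}}\tau)\nu_{\max}^2\lmax)$. That yields an epoch reduction of the same shape, but with roughly $1/(4(1+2\rho_{\text{dyn}}\tau))$ in place of $1/2$, not the stated constant.
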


\begin{proof}
The proof adapts the three-stage architecture of ADG~\cite{avron2015revisiting} (Theorem~2) to the power-weighted setting: one-step expansion, telescoped recursion with cross-term decoupling, and epoch assembly via the ``error cannot collapse'' argument. The substitutions are: Lemma~\ref{lem:dynamic-async-1} replaces the uniform one-step lemma, and the cross-term parameter $\rho_{\text{dyn}}$ from Section~\ref{sec:async} replaces ADG's $\rho$. We carry out the steps that involve the residual-weighted analysis and refer to ADG for the standard epoch-assembly.

\smallskip\noindent\textit{Step 1: One-step error identity.} With unit-diagonal $A$ and $\beta = 1$, the update~\eqref{eq:async-update} gives $e^{(j+1)} = e^{(j)} + \tilde r_{k_j}^{(j)} e_{k_j}$. The accumulated in-flight updates between read time $s(j)$ and write time $j$ give
\[
r^{(j)} = \tilde r^{(j)} - \sum_{l \in J_j} \tilde r_{k_l}^{(l)} A e_{k_l}, \qquad J_j = \{l : s(j) \leq l < j\}, \qquad |J_j| \leq \tau,
\]
where $J_j$ is the in-flight set and the bound on its size is the bounded-delay condition. Expanding $\Anorm{e^{(j+1)}}^2$ in the $A$-inner product, using $A_{k_j,k_j} = 1$ and substituting $(Ae^{(j)})_{k_j} = -r_{k_j}^{(j)}$,
\begin{equation}
\label{eq:async-onestep-proof}
\Anorm{e^{(j+1)}}^2 = \Anorm{e^{(j)}}^2 - (\tilde r_{k_j}^{(j)})^2 + 2 \sum_{l \in J_j} A_{k_j, k_l}\,\tilde r_{k_j}^{(j)}\,\tilde r_{k_l}^{(l)}.
\end{equation}
The negative term is the stale progress; the sum is the interference from in-flight updates.

\smallskip\noindent\textit{Step 2: AM--GM decoupling.} Apply $2|ab| \leq a^2 + b^2$ to each cross-coupling term:
\begin{equation}
\label{eq:async-amgm}
\left|2 A_{k_j, k_l}\,\tilde r_{k_j}^{(j)}\,\tilde r_{k_l}^{(l)}\right| \leq |A_{k_j, k_l}| \left[(\tilde r_{k_j}^{(j)})^2 + (\tilde r_{k_l}^{(l)})^2\right].
\end{equation}

\smallskip\noindent\textit{Step 3: Conditional expectations.} Let $\mathcal{F}_j$ denote the $\sigma$-algebra of all information up to (but not including) the random choice of $k_j$, so that $\tilde r^{(j)}$, $J_j$, and $\{(k_l, \tilde r^{(l)}) : l < j\}$ are $\mathcal{F}_j$-measurable. We require three conditional bounds.

\textit{(a) Progress (lower).} Lemma~\ref{lem:dynamic-async-1} gives
\[
\E\!\left[(\tilde r_{k_j}^{(j)})^2 \mid \mathcal{F}_j\right] \;\geq\; \frac{\nu_{\min}^2\,\lmin}{n}\,\Anorm{\tilde e^{(j)}}^2.
\]

\textit{(b) Stale interference factor.} Since $(\tilde r_{k_l}^{(l)})^2$ is $\mathcal{F}_j$-measurable it pulls out of the expectation, and the matrix entry is bounded by the cross-term inequality preceding Theorem~\ref{thm:async-power-weighted}:
\[
\E\!\left[|A_{k_j, k_l}|\,(\tilde r_{k_l}^{(l)})^2 \mid \mathcal{F}_j\right] = (\tilde r_{k_l}^{(l)})^2\,\E\!\left[|A_{k_j, k_l}| \mid \mathcal{F}_j\right] \;\leq\; \rho_{\text{dyn}}\,(\tilde r_{k_l}^{(l)})^2.
\]

\textit{(c) Current interference factor.} The sampling distribution is correlated with the residual square, so the bound proceeds through the IPR identity rather than the Cauchy--Schwarz cross-term inequality alone:
\begin{align*}
\E\!\left[|A_{k_j, k_l}|\,(\tilde r_{k_j}^{(j)})^2 \mid \mathcal{F}_j\right]
&= \sum_{r=1}^n \frac{(\tilde r_r^{(j)})^2}{\|\tilde r^{(j)}\|_2^2}\,|A_{k_l, r}|\,(\tilde r_r^{(j)})^2
= \frac{1}{\|\tilde r^{(j)}\|_2^2}\sum_{r=1}^n |A_{k_l, r}|\,(\tilde r_r^{(j)})^4 \\
&\leq \|A_{k_l, :}\|_\infty \cdot \frac{\|\tilde r^{(j)}\|_4^4}{\|\tilde r^{(j)}\|_2^2}
= \|A_{k_l, :}\|_\infty \cdot \frac{\nu^2(\tilde r^{(j)})}{n}\,\|\tilde r^{(j)}\|_2^2 \\
&\leq \rho_2 \cdot \frac{\nu_{\max}^2}{n}\,\|\tilde r^{(j)}\|_2^2 \;\leq\; \rho_{\text{dyn}}\,\|\tilde r^{(j)}\|_2^2,
\end{align*}
where we used $|A_{k_l, r}| \leq \|A_{k_l, :}\|_\infty \leq \|A_{k_l, :}\|_2 \leq \rho_2$, the IPR identity $\|\tilde r^{(j)}\|_4^4 = (\nu^2(\tilde r^{(j)})/n)\,\|\tilde r^{(j)}\|_2^4$, the global bound $\nu^2 \leq \nu_{\max}^2$, and finally $\nu_{\max}^2/n \leq \nu_{\max}/\sqrt n$ (which holds because $\nu_{\max} \leq \sqrt n$).

Combining (a)--(c) with~\eqref{eq:async-onestep-proof} and~\eqref{eq:async-amgm}, and using $|J_j| \leq \tau$,
\begin{equation}
\label{eq:async-condexp}
\E[\Anorm{e^{(j+1)}}^2 \mid \mathcal{F}_j] \;\leq\; \Anorm{e^{(j)}}^2 - \frac{\nu_{\min}^2\,\lmin}{n}\,\Anorm{\tilde e^{(j)}}^2 + \rho_{\text{dyn}}\tau\,\|\tilde r^{(j)}\|_2^2 + \rho_{\text{dyn}}\sum_{l \in J_j}(\tilde r_{k_l}^{(l)})^2.
\end{equation}

\smallskip\noindent\textit{Step 4: Telescoping and double-counting.} Take total expectation, write $E_j = \E[\Anorm{e^{(j)}}^2]$, and sum~\eqref{eq:async-condexp} over $j = 0, \ldots, m-1$. The double sum over in-flight pairs $(j, l \in J_j)$ re-indexes as a single sum over $l$, since each $l$ appears in $J_j$ for at most $\tau$ subsequent steps:
\[
\sum_{j=0}^{m-1} \sum_{l \in J_j} \E[(\tilde r_{k_l}^{(l)})^2] \;\leq\; \tau \sum_{l=0}^{m-1} \E[(\tilde r_{k_l}^{(l)})^2].
\]
Both single sums in the resulting expression involve $\E[(\tilde r_{k_j}^{(j)})^2]$ and $\E[\|\tilde r^{(j)}\|_2^2]$, which are controlled by $\E[\Anorm{\tilde e^{(j)}}^2]$ via the upper side of Lemma~\ref{lem:dynamic-async-1} and the spectral bound $\|\tilde r^{(j)}\|_2^2 \leq \lmax \Anorm{\tilde e^{(j)}}^2$. Assembling the coefficients, the telescoped bound takes the form
\begin{equation}
\label{eq:async-telescoped}
E_m \;\leq\; E_0 \;-\; \frac{\nu_{\min}^2\,\tilde\nu_\tau\,\lmin}{n}\,\sum_{j=0}^{m-1} \E[\Anorm{\tilde e^{(j)}}^2],
\end{equation}
where $\tilde\nu_\tau = 1 - 2\rho_{\text{dyn}}\tau$ emerges from the AM--GM-induced combination of the two cross-term contributions in exact analogy with ADG's calculation, with $\rho_{\text{dyn}}$ in place of $\rho$ and an additional $\nu_{\min}^2$ multiplier on the leading progress coefficient. The stated condition $\tau < \sqrt n/(2\nu_{\max}\rho_2)$ guarantees $\tilde\nu_\tau > 0$.

\smallskip\noindent\textit{Step 5: Epoch reduction.} ADG's ``error cannot collapse'' argument now applies, with the worst-case single-step contraction rate set by the \emph{upper} side of Lemma~\ref{lem:dynamic-async-1}: ignoring the non-negative cross terms,
\[
    \E[\Anorm{e^{(j+1)}}^2] \;\geq\; \left(1 - \frac{\nu_{\max}^2\,\lmax}{n}\right) \E[\Anorm{e^{(j)}}^2].
\]
Iterating, the stale errors satisfy $\E[\Anorm{\tilde e^{(j)}}^2] \geq (1/2)\,E_0$ throughout an epoch of length $m \leq \ln(2)\,n/(\nu_{\max}^2\lmax)$. Choosing $m$ at this boundary, the sum is bounded below by the integral of the exponential lower bound on the errors:
\[
    \sum_{j=0}^{m-1}\E[\Anorm{\tilde e^{(j)}}^2] \;\geq\; \frac{n}{2\,\nu_{\max}^2\,\lmax}\,E_0.
\]
Substituting into~\eqref{eq:async-telescoped},
\[
    E_m \;\leq\; E_0 - \frac{\nu_{\min}^2\,\tilde\nu_\tau\,\lmin}{n} \cdot \frac{n}{2\,\nu_{\max}^2\,\lmax}\,E_0
    \;=\; \left(1 - \frac{\nu_{\min}^2}{\nu_{\max}^2}\cdot\frac{\tilde\nu_\tau}{2\kappa}\right) E_0,
\]
where the final equality uses $\lmin/\lmax = 1/\kappa$.
\end{proof}

Both $\nu_{\min}$ and $\nu_{\max}$ appear in the theorem. $\nu_{\min}$
bounds progress from below (imagining the flattest residual) and
multiplies the contraction rate. $\nu_{\max}$ bounds staleness from
above (imagining the spikiest residual) and appears inside
$\rho_{\text{dyn}}$. This pessimism is genuine: when the residual is
spiky the analysis simultaneously (a) underestimates progress using the
flat-residual bound and (b) overestimates staleness using the spiky-residual
bound. See Section~\ref{sec:limitations} for a discussion of this and
other sharpening targets.

The bound in Theorem~\ref{thm:async-power-weighted} differs from the
uniform ADG result in two complementary ways. The leading progress
coefficient $\lmin/n$ is amplified by $\nu_{\min}^2$, an improvement
whenever the residual is non-uniform; the cross-term parameter
$\rho_{\text{dyn}}$ scales as $\nu_{\max}/\sqrt n$ rather than the
uniform $1/n$, a degradation that does not vanish even when the
residual is flat. The two effects pull in opposite directions, and the
ratio $\nu_{\max}/\nu_{\min}$ controls which one dominates. When the
residual is strongly non-uniform along the trajectory (as
Section~\ref{sec:experiments:results} demonstrates is the case for the
problem classes of interest) the progress amplification can
compensate for the looser cross-term, and the bound provides meaningful
information beyond what the uniform ADG result gives.


\begin{center}
\renewcommand{\arraystretch}{1.3}
\begin{tabular}{@{}lcc@{}}
\toprule
& \textbf{ADG (uniform)} & \textbf{Dynamic ($\ell=2$)} \\
\midrule
Progress coefficient         & $\lmin/n$                   & $\nu_{\min}^2 \lmin/n$ \\
Cross-term parameter         & $\rho = \|A\|_\infty/n$     & $\rho_{\text{dyn}} = \nu_{\max}\rho_2/\sqrt n$ \\
Scaling (sparse $A$)         & $O(1/n)$                    & $O(\nu_{\max}/\sqrt n)$ \\
Allowable delay              & $\tau = O(n)$               & $\tau = O(\sqrt n/\nu_{\max})$ \\
Epoch length                 & $\geq 0.693\,n/\lmax$       & $\geq 0.693\,n/(\nu_{\max}^2\lmax)$ \\
Epoch reduction              & $1 - \nu_\tau^{\mathrm{ADG}}/(2\kappa)$ & $1 - (\nu_{\min}^2/\nu_{\max}^2)\tilde\nu_\tau/(2\kappa)$ \\
\bottomrule
\end{tabular}
\end{center}

Setting $\nu_{\min} = \nu_{\max} = 1$ (perfectly uniform residual)
recovers the ADG epoch-reduction form $1 - \tilde\nu_\tau/(2\kappa)$
but leaves $\rho_{\text{dyn}} = \rho_2/\sqrt n = O(1/\sqrt n)$. 
Theorem~\ref{thm:async-power-weighted} and ADG Theorem~2a should
therefore be compared as complementary results for different sampling
strategies rather than as a generalization and its special case.
When $\nu_{\min}^2 \approx \nu_{\max}^2$ (as observed for all three
problem classes in Section~\ref{sec:experiments} after the initial
transient, with ratios $\nu_{\max}^2/\nu_{\min}^2 \leq 2.4$ on
\texttt{laplace} and $\leq 1.7$ on \texttt{fem}), the per-epoch
reduction $(\nu_{\min}^2/\nu_{\max}^2)\tilde\nu_\tau/(2\kappa)$
approaches the ADG form $\tilde\nu_\tau/(2\kappa)$ over an epoch of
length $\sim n/(\nu_{\max}^2\lmax)$, a factor $\nu_{\max}^2$ shorter
than the ADG epoch. The per-iteration progress is therefore
$\nu_{\max}^2 \cdot (\nu_{\min}^2/\nu_{\max}^2) = \nu_{\min}^2$ times
faster than the uniform baseline. For localized-forcing problems where
$\nu_{\min}^2 \approx 5$, this gives roughly $5\times$ faster
per-iteration convergence than the uniform ADG rate predicts.

Theorem~\ref{thm:async-power-weighted} inherits the consistent-reads
assumption from the ADG framework, which is the standard setting that
makes the stale residual $\tilde r^{(j)}$ a well-defined snapshot
amenable to the conditional-expectation argument used in the proof.
The hardware experiments in Section~\ref{sec:experiments:results} reveal
that this assumption is empirically the \emph{harder} case rather than
the easier one for power-weighted sampling at high concurrency:
consistent reads destabilize the iteration on all three test problems,
while inconsistent reads remain stable, and the underlying mechanism is explored in
Section~\ref{sec:experiments:results}. 
The theorem above should be read as proving a convergence rate
under an idealized read model rather than as recommending consistent
reads for deployment; a rigorous inconsistent-read analog that captures
the feedback-damping effect is the leading direction of follow-up work.

\subsection{Step-Size Control for Large Delays}

The condition $2\rho_{\text{dyn}}\tau < 1$ in Theorem~\ref{thm:async-power-weighted}
can fail if either $\tau$ or $\nu_{\max}$ is large, although the hardware
experiments of Section~\ref{sec:experiments} show the realized
$\rho_{\text{dyn}}\tau$ comfortably inside this window across the full
thread-count sweep. The apparent sharpness is an artifact of fixing the
step size at $\beta = 1$ rather than a feature of the iteration itself:
under-relaxation ($\beta < 1$ in~\eqref{eq:async-update}) recovers
convergence for \emph{any} finite $\tau$ and $\nu_{\max}$, at a rate that
degrades continuously rather than collapsing at a boundary. The condition
demarcates the regime where the full Jacobi step is feasible, not the
regime where convergence is possible at all. This is the dynamic analog
of ADG Theorem~3.

To see this, redo the unrolled recursion of
Theorem~\ref{thm:async-power-weighted} with $\beta$ in place of $1$. The
progress term (from expanding $\Anorm{x^{(j+1)} - x^*}^2$) picks up a
factor $\beta(2-\beta)$, while the cross-coupling damage between stale
and current updates scales as $\beta^2$ since each commit is
$\beta$-scaled. Progress is therefore roughly linear in $\beta$ while
damage is purely quadratic; the asymmetry that makes shrinking
$\beta$ a net win. Repeating the AM--GM decoupling and double-counting
argument with these factors gives the effective convergence parameter
\[
\tilde\nu_\tau(\beta) = 2\beta - \beta^2 - 2\rho_{\text{dyn}}\tau\beta^2
= 2\beta - \beta^2(1 + 2\rho_{\text{dyn}}\tau).
\]
Convergence requires $\tilde\nu_\tau(\beta) > 0$, which rearranges to
$\beta < 2/(1 + 2\rho_{\text{dyn}}\tau)$. This window contains
$\beta = 1$ precisely when $2\rho_{\text{dyn}}\tau < 1$, so
Theorem~\ref{thm:async-power-weighted}'s condition is exactly the
boundary at which the full Jacobi step ceases to be feasible.
Maximizing $\tilde\nu_\tau(\beta)$ by differentiating gives
$\tilde\beta = 1/(1 + 2\rho_{\text{dyn}}\tau)$, which lies in the
window for every $\rho_{\text{dyn}}\tau \geq 0$ and on substitution
yields
\[
\tilde\nu_\tau(\tilde\beta) = \frac{2}{1 + 2\rho_{\text{dyn}}\tau}
- \frac{1 + 2\rho_{\text{dyn}}\tau}{(1 + 2\rho_{\text{dyn}}\tau)^2}
= \frac{1}{1 + 2\rho_{\text{dyn}}\tau}.
\]
The resulting rate degrades smoothly with $\rho_{\text{dyn}}\tau$: at
$\rho_{\text{dyn}}\tau = 0$ we recover the synchronous full-step rate;
as $\rho_{\text{dyn}}\tau \to \infty$ the rate vanishes but never
becomes negative.

\begin{theorem}[Asynchronous power-weighted Jacobi with step-size control]
\label{thm:async-power-weighted-step}
Consider the asynchronous iteration~\eqref{eq:async-update} with step-size $\beta \in (0, 1]$, unit diagonal $A$, power-weighted sampling ($\ell = 2$), consistent reads, and bounded delay $\tau$. Under Definition~\ref{assump:global-ipr}, for any $\tau < \infty$ and any $\nu_{\max} < \infty$, the choice
$$
\tilde\beta = \frac{1}{1 + 2\rho_{\text{dyn}}\tau}
$$
yields convergence with epoch reduction
$$
E_m \leq \left(1 - \frac{\nu_{\min}^2}{2\kappa(1 + 2\rho_{\text{dyn}}\tau)}\right) E_0
$$
for every epoch of length $m \geq 0.693\,n/(\nu_{\min}^2\lmax)$.
\end{theorem}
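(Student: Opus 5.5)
The plan is to rerun the five-step proof of Theorem~\ref{thm:async-power-weighted} with the step size $\beta$ carried through. I then set $\beta=\tilde\beta$ and replace the ``error cannot collapse'' step with a dichotomy argument. The replacement is what lets the epoch length be governed by $\nu_{\min}^2$ rather than $\nu_{\max}^2$, and the reduction carry no $\nu_{\min}^2/\nu_{\max}^2$ factor.

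\emph{Step 1 (one-step identity with $\beta$).} With the update $e^{(j+1)}=e^{(j)}+\beta\tilde r^{(j)}_{k_j}e_{k_j}$ and in-flight residual $r^{(j)}=\tilde r^{(j)}-\beta\sum_{l\in J_j}\tilde r^{(l)}_{k_l}Ae_{k_l}$, expanding the $A$-norm gives
\[
\Anorm{e^{(j+1)}}^2=\Anorm{e^{(j)}}^2-(2\beta-\beta^2)(\tilde r^{(j)}_{k_j})^2+2\beta^2\sum_{l\in J_j}A_{k_j,k_l}\tilde r^{(j)}_{k_j}\tilde r^{(l)}_{k_l}.
\]
The progress term therefore carries the factor $2\beta-\beta^2$ and the interference term carries $\beta^2$.

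\emph{Steps 2--4 (decoupling and telescoping).} I apply the AM--GM split exactly as in \eqref{eq:async-amgm}. I then use the conditional bounds (a)--(c) of the previous proof, namely Lemma~\ref{lem:dynamic-async-1} for the progress term and $\rho_{\text{dyn}}$ for both interference factors. Next I take total expectation and re-index the in-flight double sum using $|J_j|\le\tau$. Both interference contributions are charged against $(\tilde r^{(j)}_{k_j})^2$ in expectation, in the same way the $\beta=1$ proof did. This yields the telescoped bound
\[
E_m\le E_0-\tilde\nu_\tau(\beta)\frac{\nu_{\min}^2\lmin}{n}\sum_{j<m}\E\Anorm{\tilde e^{(j)}}^2,
\qquad
\tilde\nu_\tau(\beta)=2\beta-\beta^2(1+2\rho_{\text{dyn}}\tau).
\]
Substituting $\tilde\beta$ gives $\tilde\nu_\tau(\tilde\beta)=1/(1+2\rho_{\text{dyn}}\tau)>0$ for every finite $\tau$ and $\nu_{\max}$, by the calculation preceding the theorem. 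Since $\tilde\beta\le 1$, the hypothesis $\beta\in(0,1]$ is respected.

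\emph{Step 5 (epoch assembly by dichotomy).} Set $m_0=\lceil \ln 2\; n/(\nu_{\min}^2\lmax)\rceil$ and fix an epoch of length $m\ge m_0$. There are two cases.
\begin{enumerate}[label=(\roman*)]
\item If $\E\Anorm{\tilde e^{(j)}}^2\ge E_0/2$ for all $j<m_0$, then the sum in the telescoped bound is at least $m_0E_0/2$. Hence the subtracted term is at least $\tilde\nu_\tau(\tilde\beta)\,\nu_{\min}^2(\lmin/n)\cdot \ln 2\; n/(2\nu_{\min}^2\lmax)\,E_0$. Up to the constant $\ln 2$ versus the claimed constant, this is $\nu_{\min}^2$-free progress of order $\tilde\nu_\tau/(2\kappa)$. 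Since $\nu_{\min}^2\le$ the quantity kept, I would instead keep the sum as at least $m_0 E_0/2$ with the $\nu_{\min}^2$ factor retained in the coefficient, and choose the constant in $m_0$ so that this is at least $\nu_{\min}^2\tilde\nu_\tau(\tilde\beta)/(2\kappa)$.
\item Otherwise some stale error has already fallen below $E_0/2$. In that case I need a near-monotonicity statement, $E_{j'}\le E_j\,(1+O(\beta^2\rho_{\text{dyn}}\tau\cdot\text{progress}))$, obtained from the one-step inequality with the progress term dominating the damage once $\tilde\nu_\tau(\tilde\beta)>0$. This transports the halving to time $m$ and gives $E_m\le E_0/2$, which is stronger than the claimed reduction because $\nu_{\min}^2/(2\kappa)\le 1/2$.
\end{enumerate}
Extending from $m_0$ to all $m\ge m_0$ uses the same near-monotonicity.

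\emph{Main obstacle.} I expect the near-monotonicity in case (ii) to be the main difficulty. In the asynchronous setting the cross terms can make $E_j$ increase temporarily, so ``the error stays small once it is small'' has to be proved from the telescoped inequality on every sub-window, not assumed. My first attempt would be to apply the telescoped bound from an arbitrary start index $j_0$. That gives $E_{m}\le E_{j_0}$ plus in-flight terms straddling $j_0$, of which there are at most $\tau$, each of size $O(\beta^2\rho_{\text{dyn}}\lmax\E\Anorm{\tilde e}^2)$. I would then absorb these boundary terms into the slack between $1/2$ and $1-\nu_{\min}^2/(2\kappa(1+2\rho_{\text{dyn}}\tau))$.
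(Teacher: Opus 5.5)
Your Steps 1--4 match the paper's appendix: progress factor $\beta(2-\beta)$, interference factor $\beta^2$, and $\tilde\nu_\tau(\beta)=2\beta-\beta^2(1+2\rho_{\text{dyn}}\tau)$. Step 5, however, has a gap that cannot be closed, because the inequality you are aiming for is false.

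Case (i) gives only $E_m\le\bigl(1-\ln 2\cdot\tilde\nu_\tau(\tilde\beta)/(2\kappa)\bigr)E_0$. That is short of the claim by the factor $\ln 2/\nu_{\min}^2$. Retuning the constant in $m_0$ is not an option: the statement must already hold at $m=\lceil 0.693\,n/(\nu_{\min}^2\lmax)\rceil$, while reaching the $\nu_{\min}^2$ factor this way needs $m_0\ge n/\lmax$. No lower bound on the sum can rescue it either. Already for $\tau=0$, where $\tilde\nu_\tau(\tilde\beta)=1$ and $E_j$ is non-increasing, $\sum_{j<m}E_j\le mE_0$. So at the minimal epoch length the telescoped inequality certifies a reduction of at most about $(\ln 2)/\kappa$, which is below the claimed $\nu_{\min}^2/(2\kappa)$ once $\nu_{\min}^2>2\ln 2$.

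Case (ii) rests on $\nu_{\min}^2/(2\kappa)\le 1/2$, which is unjustified: $\nu_{\min}^2$ can be as large as $n$ while $\kappa$ can be $1$. Concretely, take $A=I$ and $\tau=0$, so $\tilde\beta=1$ and the iteration is synchronous. Let $r^{(0)}$ have $n/3$ entries equal to $1$ and the rest zero. Each step zeroes one of the remaining unit entries, so for $j<n/3$ we have $\nu^2(r^{(j)})=n/(n/3-j)\ge 3$ and $E_j=n/3-j$. With $\nu_{\min}^2=3$ and $\kappa=1$, the theorem asserts $E_m\le(1-3/2)E_0<0$ for every $m\ge 0.231\,n$. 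But $E_m>0$ for all $m<n/3$.

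The paper's proof does not get around this. Its appendix declares the epoch assembly of Theorem~\ref{thm:async-power-weighted} ``unchanged'' and substitutes $\tilde\nu_\tau(\tilde\beta)$. That assembly, though, runs on the upper side of Lemma~\ref{lem:dynamic-async-1}, i.e.\ on epochs of length $\ln 2\,n/(\nu_{\max}^2\lmax)$, and it yields the reduction $(\nu_{\min}^2/\nu_{\max}^2)\,\tilde\nu_\tau(\tilde\beta)/(2\kappa)$. So you were right that the $\nu_{\min}$-only form does not follow from it.

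What your dichotomy can actually deliver is $E_m\le\bigl(1-\ln 2\cdot\tilde\nu_\tau(\tilde\beta)/(2\kappa)\bigr)E_0$ for all $m\ge m_0$. This needs the near-monotonicity in case (ii) to be proved; you correctly flag that as the real work under asynchrony. Case (ii) then closes because $\ln 2\cdot\tilde\nu_\tau(\tilde\beta)/(2\kappa)<1/2$. That is a sensible replacement statement: it avoids the $\nu_{\min}^2/\nu_{\max}^2$ loss and expresses the $\nu_{\min}^2$ speedup through the shorter epoch.

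One point inherited in your Steps 2--4 also needs repair. Bound (c) of the $\beta=1$ proof controls $\E[|A_{k_j,k_l}|(\tilde r^{(j)}_{k_j})^2\mid\mathcal F_j]$ only by $\rho_{\text{dyn}}\|\tilde r^{(j)}\|_2^2$. That is $\rho_{\text{dyn}}\,n/\nu^2(\tilde r^{(j)})$ times the progress $\E[(\tilde r^{(j)}_{k_j})^2\mid\mathcal F_j]$, so it does not justify charging that term against progress at rate $\rho_{\text{dyn}}$. To get that rate, average over $k_l$ instead of $k_j$, as ADG do with the uniform direction. For $l\in J_j$, neither $\tilde r^{(j)}$ nor $k_j$ is influenced by $k_l$ (under the same delay idealization). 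So $\E_{k_l}|A_{k_j,k_l}|\le\rho_{\text{dyn}}$ applies with $(\tilde r^{(j)}_{k_j})^2$ held fixed.
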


\begin{proof}
The argument tracks $\beta$-dependence through each step of the proof of Theorem~\ref{thm:async-power-weighted}. The conditional bounds from Step~3 of that proof (Lemma~\ref{lem:dynamic-async-1} and the cross-term inequalities) are statements about expectations over $k_j$ at the stale state and are independent of $\beta$. The $\beta$-dependence enters only through the update~\eqref{eq:async-update} itself. See Appendix~\ref{sect:detailed-proofs} for full details.
\end{proof}

The extension goes through because $\beta$ enters multiplicatively everywhere the component $k_j$ enters. Every term in the unrolled recursion picks up either $\beta$ (from the update itself) or $\beta^2$ (from a cross-coupling), and the AM--GM and double-counting arguments care only about the ratio of these scalings, not about the specific cross-term bound. Any cross-term bound (including the looser dynamic one $\rho_{\text{dyn}}$) slots into the same step-size machinery unchanged.

\subsection{Limitations}
\label{sec:limitations}

While the present analysis establishes a rigorous asynchronous convergence guarantee for residual-weighted sampling, the theoretical bounds remain conservative in several respects. We identify three specific areas where the current framework invites future refinement.

First, the global IPR bound assumes away the direction--delay coupling (Obstacle~1) to provide a worst-case guarantee. However, the IPR trajectory $\nu^2(\tilde r^{(j)})$ is a physical observable whose evolution is demonstrably smooth in practice. Tracking this evolution directly could replace the global assumption with a dynamic, step-wise analysis.

Second, the Cauchy--Schwarz step produces $\nu(\tilde r)/\sqrt n$, losing a factor of $\sqrt n$ relative to ADG. However, because the sampling probabilities $p_r = (\tilde r_r^{(j)})^2/\|\tilde r^{(j)}\|_2^2$ must sum to 1, they inherently form a probability vector with an $\ell_1$ norm of exactly 1. This property naturally invites the use of H\"older's inequality. Bounding the expected matrix entry via $\|p\|_1 \|A_{l,:}\|_\infty$ rather than Cauchy--Schwarz sidesteps the $\ell_2$ penalty entirely, giving the alternative bound $\max_r |A_{lr}| = \|A_{l,:}\|_\infty$. For sparse matrices with bounded entries this is $O(1)$ in $n$ (worse than the ADG $O(1/n)$ but with no $\nu_{\max}$ factor). The right answer is likely a convex combination of Cauchy--Schwarz and H\"older keyed to the residual's R\'enyi-2 entropy, recovering something between $O(1/n)$ and $O(1/\sqrt n)$ depending on concentration.

Third, using $\nu_{\min}$ on progress and $\nu_{\max}$ on damage is strictly worse than tracking $\nu^2(\tilde r^{(j)})$ per step. In practice, spiky residuals cause large progress boosts \emph{at the same moment} that they cause large damage amplifications; the expected product behaves better than the product of expectations. A step-wise analysis could replace the ratio $\nu_{\max}/\nu_{\min}$ with something closer to 1.

\section{Numerical Experiments}
\label{sec:experiments}

The experiments in this section serve two purposes. Synchronous runs on a
small suite of structured problems test the empirical content of the
synchronous theory: whether the IPR
$\nu^2(r) = n\,\|r\|_4^4/\|r\|_2^4$ stays bounded away from its uniform
floor along the trajectory generated by power-weighted sampling, and
whether the resulting sharpened contraction tracks the realized residual
decay. A shared-memory OpenMP implementation then tests the asynchronous
theory under emergent rather than synthetic delays, producing a natural
distribution of delays $\tau$ as a function of thread count from which we
measure both the realized IPR trajectory and the realized delay statistics
directly. We do not inject artificial delays into the shared-memory runs;
the natural delay distribution that arises from concurrent execution is
what we want to measure. We also do not run a separate straggler-resilience
study, viewing that as a distinct question about how the method behaves
outside its analytical regime, which is left as future work.


We use three SPD test problems based on two matrix-free operators, listed
in Table~\ref{tab:problems}. All three are run with the same solver and
instrumentation; the only changes between problem instances are the
operator's apply routine, the diagonal factor used to scale the update,
and the right-hand side. Operators are kept matrix-free to make the
per-update work consistent across thread counts and to avoid having explicit
sparse storage dominate cache behavior at the problem sizes considered
($n$ on the order of $10^4$--$10^5$).

The \texttt{laplace} problem uses the standard five-point Laplacian
stencil on a uniform $N \times N$ grid with Dirichlet boundary conditions
and the smooth right-hand side $b_k = \sin(\pi x_k)\sin(\pi y_k)$. This
is the eigenfunction-like case in which the residual is dominated by
low-frequency components throughout the run, producing long IPR
trajectories with $\nu^2$ modestly above the uniform floor for many
sweeps; it is the hardest of the three problems for the convergence
story and is included to stress the IPR-bound assumption on a residual
that has no localized concentration.

The \texttt{poisson} problem uses the same operator with a localized
point source $b = 100 \cdot e_{N^2/2 + N/2}$ at the grid center. This
is the localized-forcing case where the residual stays spatially
concentrated around the source for many iterations and $\nu^2(r)$
remains elevated well above its uniform floor, giving the sharpened
bound its largest empirical margin over the baseline.

The \texttt{fem} problem uses the consistent mass matrix for
piecewise-linear finite elements on a uniform partition of $[0,1]$
with $n$ nodes and spacing $h = 1/(n-1)$, with a random right-hand
side $b \sim \mathcal{N}(0, I_n)$. The off-diagonals have the opposite
sign convention from the Laplacian ($A_{k,k\pm 1} = h/6 > 0$), the
matrix is well-conditioned, and Jacobi converges rapidly. It serves as
a counterpoint to the Laplacian problems: $\nu^2$ stays close to its
floor, the sharpened bound buys relatively little, and together with
\texttt{poisson} it brackets the range of IPR behavior in structured
problems of interest.

\begin{table}[ht]
\centering
\setlength{\tabcolsep}{3pt}
\begin{tabular}{lccccc}
\hline
Problem        & Operator        & Dim $n$ & $A_{kk}$        & Off-diag         & RHS \\
\hline
\texttt{laplace} & 2D Laplacian   & $N^2$         & $4$             & $-1$             & $\sin(\pi x)\sin(\pi y)$ \\
\texttt{poisson} & 2D Laplacian   & $N^2$         & $4$             & $-1$             & point source (center) \\
\texttt{fem}     & 1D FEM mass    & $N$           & $2h/3$ or $h/3$ & $h/6$            & $\mathcal{N}(0, I_n)$ \\
\hline
\end{tabular}
\caption{Test problems and their matrix-free operator definitions.}
\label{tab:problems}
\end{table}

We report results for $N = 128$ on the 2D problems (so $n = 16{,}384$) and
$n = 8{,}192$ on the FEM problem. These sizes are large enough to be well
inside the sparse-with-bounded-row regime of the ADG analysis and small
enough that several hundred sweeps complete in minutes on a single node.
For problems whose residual is initially concentrated (\texttt{poisson} is
the extreme case, with $\nu^2 = n$ at iteration $0$), the IPR decays
toward a steady-state floor over many sweeps rather than settling quickly,
so unless otherwise noted the IPR statistics reported below are taken
from the second half of $200$-sweep hardware runs, which we have verified
to lie within the asymptotic regime for all three test problems.


The shared-memory implementation is a single-source C++17 OpenMP
program. Each matrix-free operator exposes a diagonal lookup $A_{kk}$,
an initial-residual routine, and an atomic stencil update applied via
\texttt{\#pragma omp atomic update}; the shared state is a single
iterate $x$ and a single residual $r$, both updated atomically. Each
thread runs an independent loop in which it records a dispatch step
(a relaxed atomic load of a global commit counter), reads the residual,
samples a component $k$ by inverse-CDF sampling on the $\ell^2$-weighted
distribution, computes the update value, and commits via atomic
fetch-and-add on the same counter; the realized delay for that commit
is the commit step minus the dispatch step. Runs were carried out on a
single AMD EPYC node of the ACES cluster at Texas A\&M with
\texttt{OMP\_PROC\_BIND=spread}, \texttt{OMP\_PLACES=cores}, and
exclusive node allocation. At the end of each run the maintained
residual was checked against $b - Ax$ recomputed from the final iterate;
the $\ell^2$ drift was at machine epsilon for every configuration
reported below.
 
A runtime flag selects between two read modes. In
\textbf{consistent-reads} mode, each iteration begins with a
\texttt{memcpy} of the shared residual into a thread-local buffer, and
all subsequent reads of $r$ within the iteration (both the CDF
construction and the lookup of $r_k$ for the update value) come from
that buffer. In \textbf{inconsistent-reads} mode, the per-iteration
\texttt{memcpy} is omitted; every read of $r$ goes through
\texttt{\#pragma omp atomic read} against shared memory, and the read
of $r_k$ used for the update value is performed \emph{a second time}
after sampling. That second read, which may return a different value
than the one used to construct the CDF, is the key inconsistency that
distinguishes this mode from a snapshot-based implementation. The
toggle exists to test empirically whether the consistent-reads
assumption underlying our analysis is a meaningful restriction on the
problems in our test suite, following~\cite{avron2015revisiting}.

We report four sets of measurements: synchronous IPR trajectories and a
comparison of the IPR-sharpened bound (Theorem~\ref{thm:sharp-power})
with realized $A$-norm error; hardware convergence at $T = 96$ threads
under five sampling rules (cyclic, uniform random, and power-weighted
with $\ell \in \{0.5, 1, 2, 4\}$); a thread-count sweep on \texttt{poisson}
from $T=1$ to $T=128$ that probes the IPR's sensitivity to concurrency
and tests Definition~\ref{assump:global-ipr} empirically; and paired
consistent-vs-inconsistent reads at $T = 96$ with matched RNG seeds that
quantify divergence rates. 

\subsection{Results}
\label{sec:experiments:results}

\subsubsection*{Synchronous IPR and bound tightness}

Figure~\ref{fig:combined_ipr} reports synchronous IPR trajectories under
$\ell = 2$ power-weighted sampling on all three test problems, with
numerical values summarized in Table~\ref{tab:ipr-summary}. The IPR sits
well above the uniform-residual floor of $\nu^2 = 1$ throughout every
trajectory. The \texttt{poisson} problem decays from its single-component
initial concentration ($\nu^2 = n$ at iteration $0$) through intermediate
values ($\nu^2 \approx 100, 50, 30, 20$) over the first ten sweeps before
flattening to a steady state around $5.7$, so the bound's progress
amplification is large in the early phase where most of the residual
reduction is achieved and modest in the later phase. Even \texttt{laplace},
which has a smooth right-hand side and no localized forcing, maintains a
sustained IPR near $4.8$: random sampling under power weighting actively
drives the residual toward a non-uniform shape because visited components
are zeroed while unvisited ones retain their initial residual, so even
smooth problems develop sustained concentration once the iteration runs
long enough. The well-conditioned \texttt{fem} case has a similar IPR
range (sustained $\approx 4$) but the trajectory is short because
convergence is rapid.

\begin{table}[ht]
\centering
\small
\begin{tabular}{lccccc}
\toprule
Problem & $n$ & sync $\nu^2_{\min}$ & sync $\nu^2_{\max}$ &
sync ratio $\nu^2_{\max}/\nu^2_{\min}$ & steady-state $\nu^2$ \\
\midrule
\texttt{laplace} & 16{,}384 & 2.22  & 5.43      & 2.4  & $\approx 4.8$ \\
\texttt{poisson} & 16{,}384 & 5.0   & 16{,}384  & --   & $\approx 5.7$ \\
\texttt{fem}     & 8{,}192  & 3.06  & 5.13      & 1.7  & $\approx 4.0$ \\
\bottomrule
\end{tabular}
\caption{Inverse participation ratio statistics across the three test
problems under $\ell = 2$ power-weighted synchronous sampling. The
steady-state value is the mean over the second half of a $200$-sweep
run. For \texttt{poisson}, $\nu^2_{\max}$ is the trivial single-component
bound at iteration $0$; the ratio column is left blank because the
early-iteration $\nu^2$ varies over orders of magnitude and the ratio is
not a meaningful single summary.}
\label{tab:ipr-summary}
\end{table}

\begin{figure}[ht]
    \centering
    \includegraphics[width=\textwidth]{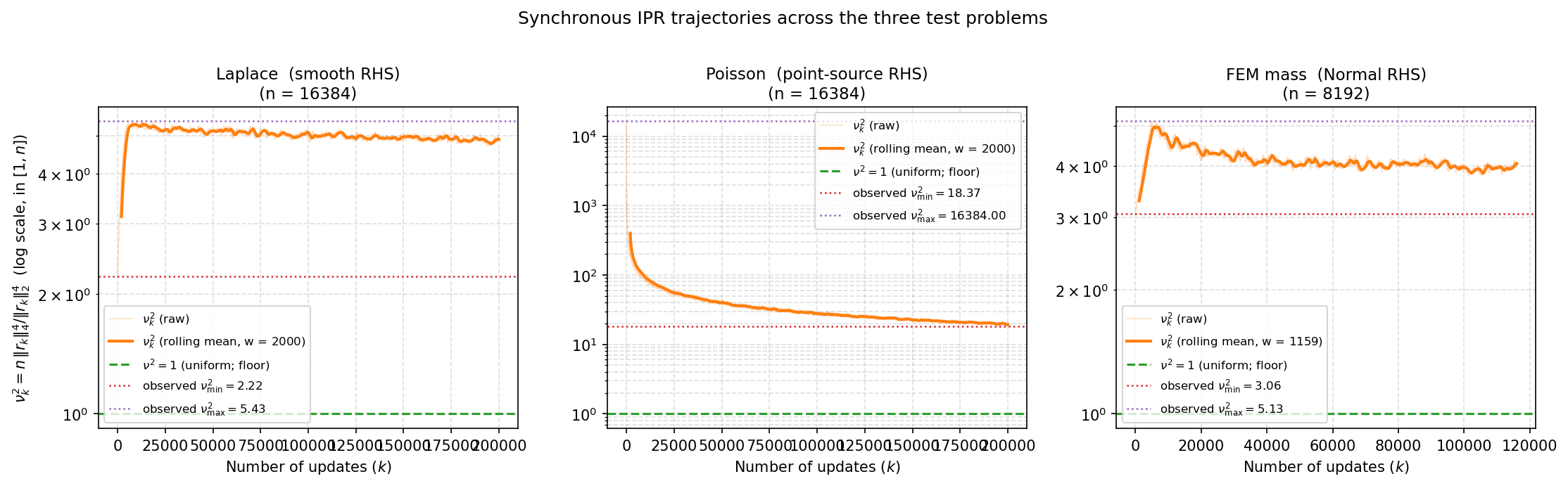}
    \caption{Synchronous IPR trajectories $\nu^2(r^{(i)})$ under
    power-weighted sampling ($\ell = 2$) across the three test problems.
    Steady-state values are roughly $4.8$ for \texttt{laplace}, $5.7$ for
    \texttt{poisson}, and $4.0$ for \texttt{fem}. The \texttt{poisson}
    trajectory decays from its concentrated initial state through
    intermediate values during the first few sweeps before flattening.
    The persistent gap above $1$ is the empirical content of the
    trajectory assumption in Corollary~\ref{cor:global-rate} and
    Theorem~\ref{thm:async-power-weighted}.}
    \label{fig:combined_ipr}
\end{figure}

Figure~\ref{fig:combined_sync_bounds} overlays the simulated error against
the general $\lmin/(n^2 A_{\max})$ bound, the diagonal-weighted bound
$\lmin/(n A_{\max})$, and the IPR-sharpened bound
$\nu^2 \lmin/(n A_{\max})$. The IPR bound is uniformly tighter than the
non-residual-weighted bounds and qualitatively tracks the realized decay
for \texttt{poisson} and \texttt{fem}. For \texttt{laplace} the gap
between the IPR bound and the realized error remains significant,
reflecting that the IPR captures one source of looseness in the analysis
but not all of them; simultaneously tracking the residual's support in
the eigenbasis of $A$ is one natural avenue for further sharpening.

\begin{figure}[ht]
    \centering
    \includegraphics[width=\textwidth]{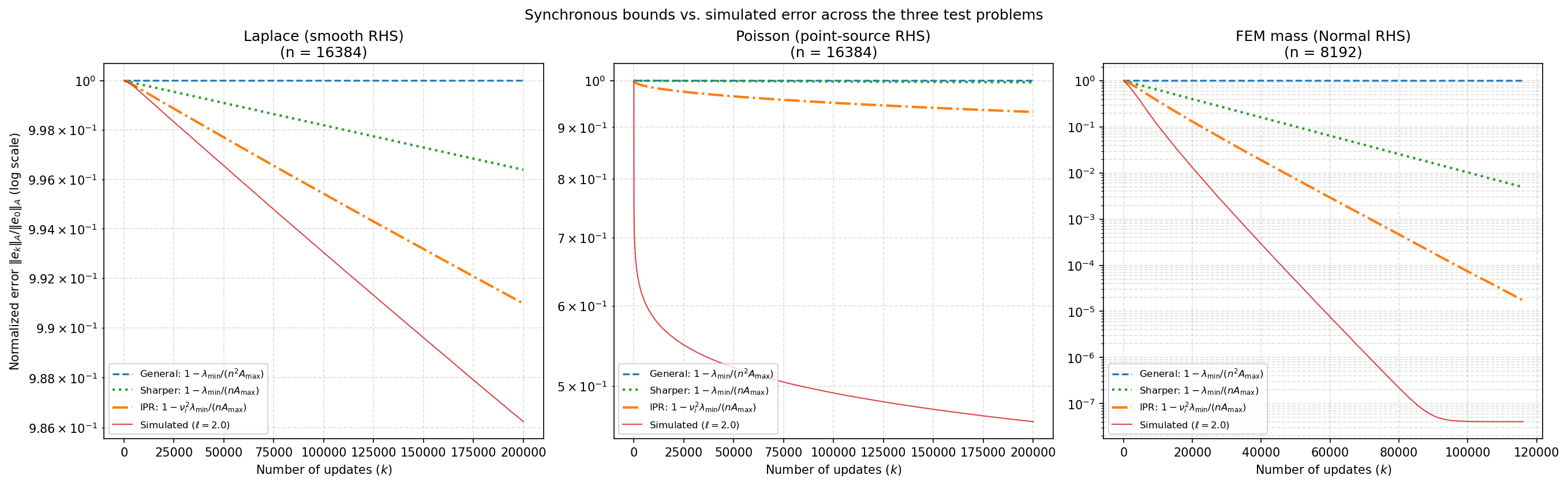}
    \caption{Theoretical convergence bounds and realized $A$-norm error
    for the three synchronous test problems. The IPR-sharpened bound
    (Theorem~\ref{thm:sharp-power}) sits between the uniform-sampling
    bound (Theorem~\ref{thm:uniform}) and the realized error in every
    case, with the gap most pronounced for \texttt{poisson} where the
    sustained IPR is largest. The simulated error decays substantially
    faster than the sharpened bound, indicating that further sharpening
    is possible.}
    \label{fig:combined_sync_bounds}
\end{figure}

\subsubsection*{Hardware convergence and IPR stability under concurrency}

Under emergent rather than synthetic delays, power-weighted sampling
translates the IPR advantage into a measurable convergence improvement.
Figure~\ref{fig:hw_convergence} reports residual decay under five sampling
rules at $T = 96$ threads on a single ACES node. The power-weighted rules
($\ell \in \{1, 2, 4\}$) reach residual $10^{-3}$ on \texttt{poisson} in
roughly half the sweeps required by uniform random or cyclic sampling;
the \texttt{laplace} advantage is smaller but consistent, matching the
smaller IPR of that problem; and \texttt{fem} converges so rapidly under
all rules that the comparison is dominated by start-up effects. The
near-coincidence of $\ell = 2$ and $\ell = 4$ curves suggests diminishing
returns from increasing residual bias once concentration is fully exploited.

\begin{figure}[ht]
    \centering
    \includegraphics[width=\textwidth]{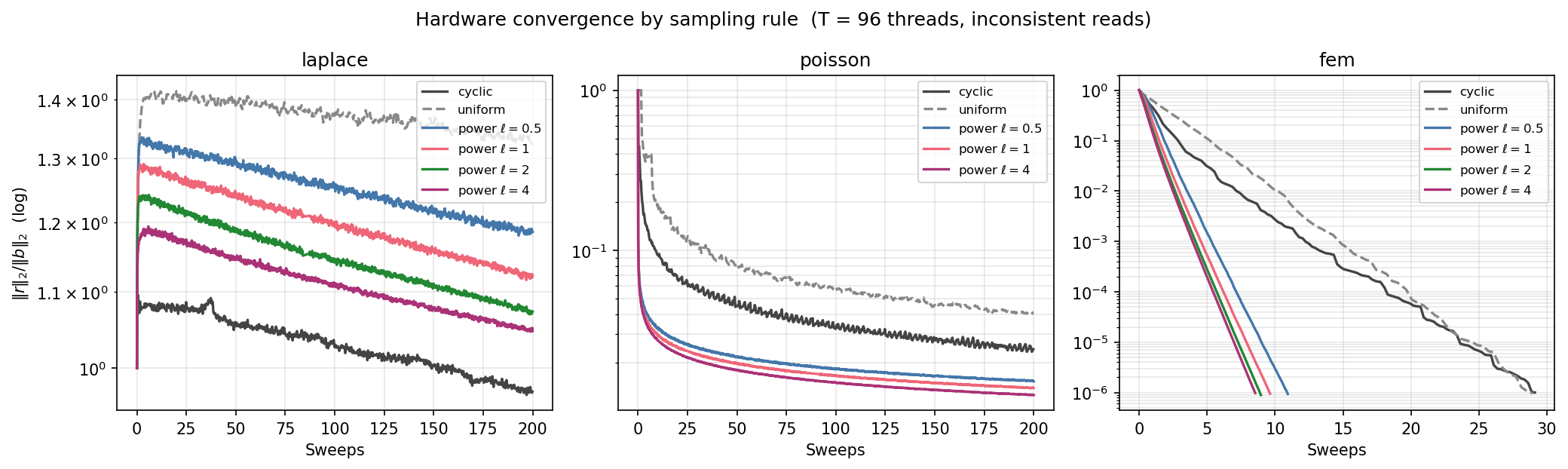}
    \caption{Hardware convergence under five sampling rules at $T=96$ threads,
    inconsistent reads. Power-weighted sampling with $\ell \in \{1, 2, 4\}$
    converges measurably faster than uniform random or cyclic sampling on
    a per-update basis, with the largest advantage for \texttt{poisson}.}
    \label{fig:hw_convergence}
\end{figure}

The hardware IPR trajectories in Figure~\ref{fig:hw_ipr} closely follow
their synchronous counterparts, providing direct empirical evidence that
the global trajectory bound in Definition~\ref{assump:global-ipr} is not
violated by hardware-induced delays. Uniform and cyclic sampling produce
noticeably \emph{higher} IPR values on \texttt{poisson} and \texttt{fem}
because they fail to depress the high-residual components fast enough:
non-power-weighted rules leave concentration on the table.

\begin{figure}[ht]
    \centering
    \includegraphics[width=\textwidth]{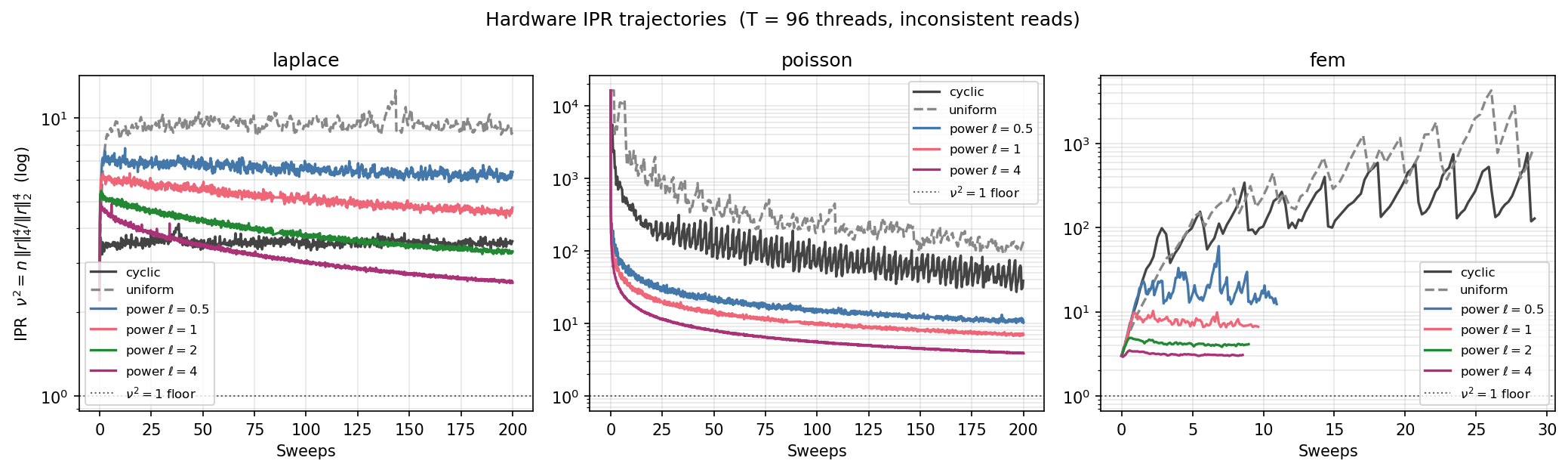}
    \caption{Hardware IPR trajectories under the configuration of
    Figure~\ref{fig:hw_convergence}. Power-weighted sampling tracks the
    synchronous trajectories of Figure~\ref{fig:combined_ipr}, confirming
    that residual concentration is preserved under emergent asynchronous
    execution. Uniform and cyclic sampling produce substantially higher
    IPR for \texttt{poisson} and \texttt{fem}, reflecting that those
    rules visit components less aggressively in proportion to residual
    mass.}
    \label{fig:hw_ipr}
\end{figure}

The thread-count sweep on \texttt{poisson} (Figure~\ref{fig:hw_scaling}
and Table~\ref{tab:hw-scaling}) gives three findings. Wall time scales
near-linearly up to $\sim 64$ threads and flattens beyond, reflecting
memory bandwidth and atomic contention on the EPYC node rather than any
algorithmic limit. Realized mean and maximum delay both grow sub-linearly
with thread count, and at $T=128$ the realized mean delay is well inside
the convergence window predicted by
Theorem~\ref{thm:async-power-weighted}. Most importantly for the theory,
the sustained $\nu^2$ varies by less than $0.2$ in absolute terms
($\sim 3\%$) across the entire sweep from $T = 1$ to $T = 128$. This is
the strongest empirical support for Definition~\ref{assump:global-ipr}:
even under heavy concurrent execution with realized mean delays in the
$10^2$ commit range, the residual concentration remains essentially
identical to the synchronous baseline, justifying treating the IPR as a
structural property of the iteration trajectory rather than an artifact
of the delay model.

\begin{figure}[ht]
    \centering
    \includegraphics[width=\textwidth]{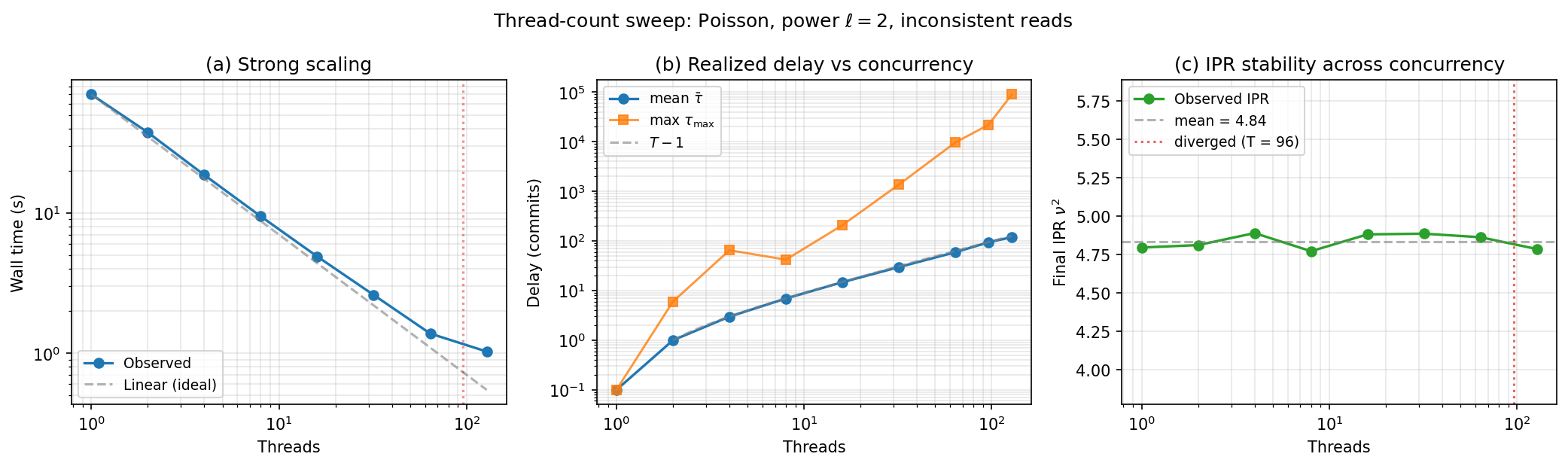}
    \caption{Thread-count sweep for \texttt{poisson}, power-weighted
    ($\ell=2$), inconsistent reads. (a) Wall time scales near-linearly up
    to $\sim 64$ threads before flattening. (b) Realized mean and maximum
    delay both grow sub-linearly with thread count; the $\tau=1$ reference
    line places the realized regime well inside the convergence window of
    Theorem~\ref{thm:async-power-weighted}. (c) The sustained IPR is
    essentially independent of thread count, validating
    Definition~\ref{assump:global-ipr} empirically. An exploration of the occasional divergence for $T=96$ is given in
    Table~\ref{tab:consistency-sweep}.}
    \label{fig:hw_scaling}
\end{figure}

\begin{table}[ht]
\centering
\small
\renewcommand{\arraystretch}{1.1}
\begin{tabular}{r r r r r}
\toprule
$T$ & Wall time (s) & $\bar\tau$ & $\tau_{\max}$ & Sustained $\nu^2$ \\
\midrule
  1  &   70.05 &     0.0 &         0 & 5.67 \\
  2  &   37.57 &     1.0 &         6 & 5.67 \\
  4  &   18.66 &     3.0 &        65 & 5.67 \\
  8  &    9.46 &     6.9 &        42 & 5.67 \\
 16  &    4.89 &    14.8 &       208 & 5.68 \\
 32  &    2.60 &    29.8 &    1\,368 & 5.70 \\
 64  &    1.38 &    59.5 &    9\,667 & 5.73 \\
 96\tablefootnote{Averaged over the converged subset of 6 jobs at this thread count; see Table 4 for the corresponding divergence-rate study with seed variation.}  &    0.95 &    89.5 &    6\,928 & 5.74 \\
128  &    1.02 &   111.6 &   89\,607 & 5.84 \\
\bottomrule
\end{tabular}
\caption{Hardware scaling for \texttt{poisson}, $N=128$, power-weighted
($\ell=2$), inconsistent reads. $\bar\tau$ and
$\tau_{\max}$ are the mean and cumulative-maximum commit-delay over the
second half of the run; sustained $\nu^2$ is the mean IPR over the same
window.}
\label{tab:hw-scaling}
\end{table}

\subsubsection*{Consistent vs.\ inconsistent reads}

The original ADG analysis treats consistent reads as the easy case for
the convergence theory and inconsistent reads as a more delicate setting
that requires a separate analysis (\cite[Section~7]{avron2015revisiting}).
Under power-weighted sampling on a concentrated residual, the empirical
ordering is reversed. Table~\ref{tab:consistency-sweep} reports a paired
sweep on \texttt{poisson} at $T = 96$ with six matched seeds: inconsistent
reads diverged in 1 of 6 runs (the same residual-collision phenomenon
documented in Table~\ref{tab:hw-scaling}), while consistent reads diverged
in all 6. Combined with six earlier consistent runs at seed=42 that also
all diverged, the divergence rate for consistent reads at this
configuration is $12/12$ versus $1/6$ for inconsistent reads with proper
seed variation. The per-iteration cost of consistent reads exceeded that
of inconsistent reads on every problem by $10$--$25\%$ due to the
iteration-start \texttt{memcpy}; on \texttt{fem}, where inconsistent
reads reach the $10^{-6}$ tolerance within roughly $30$ sweeps, the
consistent runs ran to the $200$-sweep cap with a final wall time
$25\times$ higher.

The mechanism we conjecture is that the IPR plays a dual role under
power weighting: it amplifies expected progress
(Theorem~\ref{thm:sharp-power}) but also amplifies the collision rate
between concurrent threads. The argument in~\cite{avron2015revisiting}
that inconsistent-read violations are rare in the sparse reference
scenario relies on each update landing on a uniformly random component:
another thread's update hits a fixed read set $S$ with probability at
most $|S|/n$. Under power weighting with $\ell = 2$ the per-component
update probabilities are $r_k^2 / \|r\|_2^2$ rather than $1/n$, and a
short calculation using $\|r\|_\infty^4 \leq \|r\|_4^4 = \nu^2 \|r\|_2^4 / n$
shows the collision-probability bound becomes
$|S| \cdot \sqrt{\nu^2/n}$. The per-iteration collision probability is
therefore inflated by a factor at most $\sqrt{n\nu^2}$ relative to the
uniform case; roughly $305\times$ at the parameters of our \texttt{poisson}
runs. The expected number of concurrent thread collisions on a single
component per sampling round is $\binom{T}{2}\nu^2/n$; even modest IPR
($\nu^2 \approx 3$ on \texttt{laplace}) inflates the collision rate by
orders of magnitude relative to uniform sampling. 

Using consistent
reads, every such collision contributes a frozen-value pile-on whose net
effect is to overshoot the residual at the colliding component by a
factor proportional to the number of colliding threads, with the
\texttt{poisson} initial $\nu^2 \approx n$ producing catastrophic
overshoot in a single sweep and the smaller \texttt{laplace} and
\texttt{fem} rates producing gradual accumulation that nonetheless
reaches divergence within a few thousand sweeps. Under inconsistent
reads, the second residual read used to set the update value sees the
partial effect of any in-flight commits to the colliding component and
shrinks the local update accordingly: a sign-aware correction that the
absolute-value bound above cannot capture. The same $\nu^2$ that
amplifies progress in Theorem~\ref{thm:sharp-power} also amplifies
collision probability ($\nu^2/n$ per pair); under consistent reads
these roles compound destructively, while under inconsistent reads the
feedback mechanism mediates between them. A rigorous inconsistent-read
analog of Theorem~\ref{thm:async-power-weighted} that captures this
distinction (rather than just bounding collision frequency from above)
is the natural next theoretical step, requiring an analysis that tracks
the sign-correlation between the stale read and the in-flight update
which the current proof architecture handles only as a worst-case
absolute value.

\begin{table}[ht]
\centering
\small
\renewcommand{\arraystretch}{1.1}
\begin{tabular}{l r r c c}
\toprule
Problem & $n$ & $T$ & Inconsistent & Consistent \\
\midrule
\multicolumn{5}{l}{\textit{Original sweep (baseline $n$)}}\\
\texttt{poisson} & $16{,}384$  & 96 & 5/6 conv. & 0/6 conv. \\
\texttt{laplace} & $16{,}384$  & 96 & 6/6 stable & 0/6 conv. \\
\texttt{fem}     & $8{,}192$   & 96 & 6/6 conv.  & 0/6 conv. \\
\midrule
\multicolumn{5}{l}{\textit{Scaling test ($4\times$ larger $n$)}}\\
\texttt{poisson} & $65{,}536$  & 96 & 6/6 conv.  & 0/6 conv. \\
\texttt{laplace} & $65{,}536$  & 96 & 6/6 stable & \textbf{6/6 conv.} \\
\texttt{fem}     & $32{,}768$  & 96 & 6/6 conv.  & \textbf{6/6 conv.} \\
\bottomrule
\end{tabular}
\caption{Paired consistent-vs-inconsistent runs across the three test
problems, power-weighted ($\ell=2$), $T=96$, with six matched seeds.
At baseline $n$ all three problems show $0/6$ consistent-reads convergence.
Quadrupling $n$ rescues consistent reads on \texttt{laplace} and
\texttt{fem} ($6/6$ in both cases), because the per-pair collision
probability $\nu^2/n$ scales as $1/n$ when $\nu^2$ is bounded. On
\texttt{poisson} the localized point-source forcing produces
$\nu^2 \propto n$ initially, so the collision probability is
$n$-independent and quadrupling $n$ does not help.}
\label{tab:consistency-sweep}
\end{table}

\begin{table}[ht]
\centering
\small
\renewcommand{\arraystretch}{1.1}
\begin{tabular}{r c c c c c}
\toprule
$T$ & 8 & 16 & 32 & 64 & 96 \\
\midrule
Inconsistent (conv.) & 6/6 & 6/6 & 6/6 & 6/6 & 5/6 \\
Consistent  (conv.)  & 0/6 & 0/6 & 0/6 & 0/6 & 0/6 \\
\bottomrule
\end{tabular}
\caption{Concurrency sweep on \texttt{poisson} ($N=128$, $\ell=2$).
Consistent reads diverges at every tested thread count: the iteration-0
pile-on at the point-source residual is catastrophic for any $T \geq 2$
because the entire sampling distribution collapses onto a single component
while all $T$ threads see the same snapshot value. Inconsistent reads
remains stable across the entire range.}
\label{tab:threshold-sweep}
\end{table}

The four experimental panels point to a single mechanism: the IPR
controls both how much power-weighted sampling helps in expectation and
how much it hurts under concurrency. On problems where $\nu^2$ stays
bounded as $n$ grows (\texttt{laplace}, \texttt{fem}), both the progress
amplification and the collision rate scale modestly, and quadrupling $n$
is enough to rescue consistent reads (Table~\ref{tab:consistency-sweep}).
On problems where $\nu^2$ scales with $n$ (\texttt{poisson} with its
point-source forcing), the progress amplification grows linearly with
problem size but the collision rate stays roughly constant, so consistent
reads remains broken at every $n$ we tested. In both regimes,
inconsistent reads provides the sign-aware feedback needed to absorb the
collisions. The practical recommendation is to default to inconsistent
reads whenever power-weighted sampling is used at non-trivial thread
count, and to treat the IPR not as a side observable but as the
structural quantity that simultaneously determines the algorithm's
productivity and its safety.

\section{Discussion and Conclusion}
\label{sec:conclusion}

The synchronous analysis presented here shows that the standard convergence
bound for residual-weighted randomized Jacobi is systematically loose by a
factor equal to the inverse participation ratio of the residual.
Theorem~\ref{thm:sharp-power} makes this precise: the per-step progress is
amplified by exactly $\nu^2$, a scalar that is always at least $1$, that is
strictly greater than $1$ whenever the residual is non-uniform, and that
can be computed from the residual at essentially no additional cost. The
numerical experiments in Section~\ref{sec:experiments} show that this
sharpened bound is not a theoretical curiosity. The IPR is sustained in
the range $4$--$6$ across all three test problem classes with substantially
higher values during the early-iteration phase where most of the residual
reduction happens, and it grows along the trajectory before stabilizing
because random sampling drives visited components toward their local
solutions while unvisited components retain their initial residual.

The asynchronous extension carries this analysis into the
Avron--Druinsky--Gupta framework under a global bound on the IPR
trajectory, but not cleanly: the Cauchy--Schwarz step in the cross-term
bound introduces a $\sqrt{n}$ scaling loss, so
Theorem~\ref{thm:async-power-weighted} does not subsume ADG Theorem~2a as
a special case. The two results are best read as complementary bounds
for different sampling strategies. Hardware experiments on the ACES
cluster show that the IPR advantage translates into measurably faster
convergence under real concurrent execution
(Figure~\ref{fig:hw_convergence}), that the IPR trajectory is nearly
insensitive to thread count across two decades of concurrency
(Figure~\ref{fig:hw_scaling}, varying by less than $3\%$ on
\texttt{poisson} from $T=1$ to $T=128$ despite mean delays growing by two
orders of magnitude), and that the realized delays sit well inside the
convergence window. The $\sqrt{n}$ cross-term looseness therefore does
not appear binding in practice for the problem classes considered, even
though the theoretical gap remains.

The least anticipated finding is the reversal of the standard ADG
narrative: consistent reads, the easy case for the convergence theory,
is empirically the unsafe case for power-weighted sampling at high
concurrency, while inconsistent reads remains stable. The mechanism
proposed in Section~\ref{sec:experiments:results} ties this directly to
the IPR: the same $\nu^2$ that amplifies expected progress in
Theorem~\ref{thm:sharp-power} also amplifies the per-pair thread
collision rate by a factor of $\nu^2$ over the uniform case. Under
consistent reads, collisions compound into pile-up overshoots; under
inconsistent reads, the second residual read provides a sign-aware
correction. The two roles of $\nu^2$ make a sharp empirical prediction
that we are now able to verify directly. On problems where $\nu^2$ stays
bounded as $n$ grows, both the speedup and the per-pair collision rate
$\nu^2/n$ scale modestly, so consistent reads should become safe in the
asymptotic regime; Table~\ref{tab:consistency-sweep} confirms this,
with quadrupling $n$ taking \texttt{laplace} and \texttt{fem} from
$0/6$ converged to $6/6$ under consistent reads. On problems where
$\nu^2$ scales with $n$ (e.g., localized forcing whose residual
concentration is a feature of the physics), the speedup grows linearly
with $n$ while the collision rate stays constant, and consistent reads
fails at every scale; \texttt{poisson} at $n = 65{,}536$ still diverges
in all six trials.

Several directions follow naturally. The most pressing is a rigorous
inconsistent-read analog of Theorem~\ref{thm:async-power-weighted} that
captures the feedback-damping effect, which would require tracking the
sign-correlation between the stale read and the in-flight update rather
than treating the cross-term as a worst-case absolute value. Given that
inconsistent reads turn out to be strictly preferable to consistent
reads for the problem class of interest, this is the direction that most
directly informs practitioner choice. A natural-seeming algorithmic
mitigation, preceding power-weighted sampling with $K$ uniform warmup
sweeps so that the residual diffuses before residual-weighted sampling
activates, turns out empirically not to help: a scan of
$K \in \{0, 5, 10, 20, 50, 100, 150\}$ on \texttt{poisson} at $T = 96$
gave consistent-reads divergence in $42/42$ trials regardless of $K$.
The pile-on is self-reinforcing rather than transient, because $T$
concurrent commits to any high-residual component create a fresh hot
spot that attracts the next round of power-weighted samples.

A second direction is to replace the global IPR bound in
Definition~\ref{assump:global-ipr} with a martingale or filtration
argument that tracks $\nu^2(\tilde r^{(j)})$ directly. The empirical
observations that the IPR evolves smoothly rather than fluctuating
wildly and that it is concurrency-insensitive provide scaffolding for
such an analysis; the analytical task is to convert that smoothness
into a step-to-step recurrence. A related direction is to prove
$\nu_i^2 \geq \bar\nu^2 > 1$ analytically for specific matrix classes
such as discretized elliptic PDEs with localized forcing, which would
convert Corollary~\ref{cor:global-rate} from a conditional statement
into an unconditional one for the problem class where the sharpened
bound shows the largest empirical margin. Finally, the analysis here
treats only $\ell = 2$ and could be extended to a general-$\ell$ family
of non-uniformity parameters, with the optimal $\ell$ as a function of
the residual distribution as the natural object of study and a clear
connection to the sketch-and-project framework via what residual-weighted
sketch selection does to $\lambda_{\min}(\E[Z])$.

\bmhead{Acknowledgements}

This work leveraged the ACES Cluster at Texas A\&M University under
allocation CIS250436 from the Advanced Cyberinfrastructure Coordination
Ecosystem: Services \& Support (ACCESS) program, which is supported by
U.S. National Science Foundation grants \#2138259, \#2138286, \#2138307,
\#2137603, and \#2138296.





\begin{appendices}

\section{Detailed Proof of Theorem~\ref{thm:async-power-weighted-step}}
\label{sect:detailed-proofs}
\begin{proof}
\label{proof:detailed-step-size}
\noindent\textit{Step 1: $\beta$-modified one-step identity.} With
$\beta \in (0, 1]$, the update gives
$e^{(j+1)} = e^{(j)} + \beta\,\tilde r_{k_j}^{(j)} e_{k_j}$. Each
committed update in the in-flight residual decomposition now carries an
additional factor of $\beta$:
\[
r^{(j)} = \tilde r^{(j)} - \beta \sum_{l \in J_j} \tilde r_{k_l}^{(l)} A e_{k_l}.
\]
Expanding $\Anorm{e^{(j+1)}}^2$ with these factors,
\begin{equation}
\label{eq:async-onestep-beta}
\Anorm{e^{(j+1)}}^2 = \Anorm{e^{(j)}}^2 - \beta(2-\beta)\,(\tilde r_{k_j}^{(j)})^2 + 2\beta^2 \sum_{l \in J_j} A_{k_j, k_l}\,\tilde r_{k_j}^{(j)}\,\tilde r_{k_l}^{(l)}.
\end{equation}
Compared to the $\beta = 1$ identity~\eqref{eq:async-onestep-proof}, the
progress coefficient scales as $\beta(2-\beta)$ (from the $-2\beta + \beta^2$
combination in the expansion) and the interference scales as $\beta^2$
(one factor of $\beta$ from the current update and one from the in-flight
update being measured against it).

\smallskip\noindent\textit{Step 2: Conditional expectations and
telescoping.} Steps~2--4 of the proof of
Theorem~\ref{thm:async-power-weighted} apply with the same conditional
bounds, modulated only by the $\beta$ scalings
from~\eqref{eq:async-onestep-beta}. The telescoped bound becomes
\[
E_m \;\leq\; E_0 \;-\; \frac{\nu_{\min}^2\,\lmin}{n}\,\tilde\nu_\tau(\beta)\,\sum_{j=0}^{m-1} \E[\Anorm{\tilde e^{(j)}}^2],
\]
where the effective convergence parameter combines the progress and
interference $\beta$-scalings:
\[
\tilde\nu_\tau(\beta) \;:=\; \beta(2-\beta) - 2\beta^2\rho_{\text{dyn}}\tau \;=\; 2\beta - \beta^2(1 + 2\rho_{\text{dyn}}\tau).
\]
Convergence requires $\tilde\nu_\tau(\beta) > 0$, which is equivalent to
$\beta < 2/(1 + 2\rho_{\text{dyn}}\tau)$. Unlike the $\beta = 1$ case,
no upper bound on $\rho_{\text{dyn}}\tau$ is required: for any finite
delay and any finite IPR bound, the convergence window for $\beta$ is
non-empty.

\smallskip\noindent\textit{Step 3: Optimal step size.} Maximizing
$\tilde\nu_\tau(\beta)$ over $\beta \in (0, 2/(1 + 2\rho_{\text{dyn}}\tau))$:
\[
\frac{d}{d\beta}\tilde\nu_\tau(\beta) = 2 - 2\beta\,(1 + 2\rho_{\text{dyn}}\tau) = 0 \quad\Longrightarrow\quad \tilde\beta = \frac{1}{1 + 2\rho_{\text{dyn}}\tau}.
\]
The optimizer $\tilde\beta$ lies in $(0, 1]$ for every
$\rho_{\text{dyn}}\tau \geq 0$, so the choice is always feasible (and
reduces to $\beta = 1$ in the synchronous limit $\tau = 0$). Substituting:
\[
\tilde\nu_\tau(\tilde\beta) = \frac{2}{1 + 2\rho_{\text{dyn}}\tau} - \frac{1 + 2\rho_{\text{dyn}}\tau}{(1 + 2\rho_{\text{dyn}}\tau)^2} = \frac{1}{1 + 2\rho_{\text{dyn}}\tau}.
\]

\smallskip\noindent\textit{Step 4: Epoch reduction.} The epoch-assembly
argument from Step~5 of the proof of
Theorem~\ref{thm:async-power-weighted} is unchanged (it depends only on
Lemma~\ref{lem:dynamic-async-1}, which is $\beta$-free). Substituting
the optimal $\tilde\nu_\tau(\tilde\beta)$ into the epoch bound:
\[
E_m \;\leq\; \left(1 - \frac{\nu_{\min}^2\,\tilde\nu_\tau(\tilde\beta)}{2\kappa}\right) E_0 \;=\; \left(1 - \frac{\nu_{\min}^2}{2\kappa(1 + 2\rho_{\text{dyn}}\tau)}\right) E_0.
\]
\end{proof}

\end{appendices}


\bibliography{references}

\end{document}